\theoremstyle{plain}
\newtheorem{theorem}{Theorem}[section]
\newtheorem{proposition}[theorem]{Proposition}
\newtheorem{corollary}[theorem]{Corollary}
\theoremstyle{definition}
\numberwithin{equation}{section}
\DeclareMathOperator{\L-spec}{L-Spec}
\newcommand{\bnum}{\begin{enumerate}}
\newcommand{\enum}{\end{enumerate}}
\begin{document}

\title{Laplacian  Spectrum of non-commuting  graphs of  finite groups }
\author{Parama Dutta, Jutirekha Dutta  and Rajat Kanti Nath\footnote{Corresponding author}}
\date{}
\maketitle
\begin{center}\small{ Department of Mathematical Sciences,\\ Tezpur
University,  Napaam-784028, Sonitpur, Assam, India.\\
Emails: parama@gonitsora.com, jutirekhadutta@yahoo.com and rajatkantinath@yahoo.com }
\end{center}


\smallskip

\noindent {\small{\textbf{Abstract:}  In this paper, we  compute the Laplacian  spectrum of  non-commuting graphs of some classes of finite non-abelian groups. Our computations reveal that the non-commuting graphs of all the groups considered in this paper are L-integral. We also obtain some conditions on a group $G$ so that its non-commuting graph is L-integral. 
}}

\bigskip

\noindent \small{\textbf{\textit{Key words:}} non-commuting graph, spectrum, L-integral graph, finite group.}

\noindent \small{\textbf{\textit{2010 Mathematics Subject Classification:}} 05C50, 15A18, 05C25, 20D60.}

\section{Introduction} \label{S:intro}
Let $G$ be a finite group with centre $Z(G)$. The non-commuting graph of a non-abelian group $G$, denoted by ${\mathcal{A}}_G$, is a simple undirected graph whose vertex set is $G\setminus Z(G)$ and two vertices $x$ and $y$ are adjacent if and only if $xy \ne yx$.   Various   aspects of non-commuting graphs of different finite groups can be found in  \cite{Ab06,AF14,dbb10,Abd13,tal08}.  In \cite{Abd13}, Elvierayani and Abdussakir have computed the Laplacian spectrum of the non-commuting graph of   dihedral groups $D_{2m}$ where $m$ is odd and suggested to consider the case when $m$ is even.   In  this paper, we  compute the Laplacian  spectrum of the non-commuting graph  of $D_{2m}$ for any $m \geq 3$ using a different method. Our method also enables  to compute the Laplacian  spectrum of the non-commuting graphs of  several  well-known families finite non-abelian groups such as the quasidihedral groups, generalized quaternion groups, some projective special linear groups, general linear groups etc. In a separate paper \cite{dN17}, we study  the Laplacian energy of  non-commuting graphs of the groups considered in this paper. 

For a graph ${\mathcal{G}}$ we write $\overline{\mathcal{G}}$ and $V({\mathcal{G}})$ to denote the complement of ${\mathcal{G}}$ and the  set of vertices of   ${\mathcal{G}}$ respectively.
Let $A({\mathcal{G}})$ and $D({\mathcal{G}})$ denote the adjacency matrix  and degree matrix of a graph ${\mathcal{G}}$ respectively. Then the Laplacian matrix  of ${\mathcal{G}}$ is given by  $L({\mathcal{G}})  = D({\mathcal{G}}) - A({\mathcal{G}})$. We write $\L-spec({\mathcal{G}})$  to denote the Laplacian  spectrum of ${\mathcal{G}}$ and  $\L-spec({\mathcal{G}}) = \{ \alpha_1^{a_1}, \alpha_2^{a_2}, \dots, \alpha_n^{a_n}\}$  where $\alpha_1 < \alpha_2 < \cdots < \alpha_n$ are the eigenvalues of   $L({\mathcal{G}})$ with multiplicities $a_1, a_2, \dots, a_n$ respectively. A graph ${\mathcal{G}}$ is called L-integral if $\L-spec({\mathcal{G}})$ contains only integers.   As a consequence of our results, it follows that the non-commuting graphs of all the groups considered in this paper are L-integral. It is worth mentioning that L-integral graphs are studied extensively in \cite{Abreu08, Kirkland07, Merries94}.


\section{Preliminary results}
 It is well-known that  $\L-spec(K_n) = \{0^1, n^{n - 1}\}$  where $K_n$ denotes the complete graph on $n$ vertices. Further, we have the following results.
\begin{theorem}
If $\mathcal{G} = l_1K_{m_1}\sqcup l_2K_{m_2}\sqcup\cdots  \sqcup l_kK_{m_k}$, where $l_iK_{m_i}$ denotes the disjoint union of $l_i$ copies of  $K_{m_i}$ for $1 \leq i \leq k$ and $m_1 < m_2 < \cdots < m_k$, then
\[
\L-spec({\mathcal{G}}) = \left\{0^{\sum_{i = 1}^k l_i}, m_1^{l_1(m_1 - 1)}, m_2^{l_2(m_2 - 1)}, \dots, m_k^{l_k(m_k - 1)}\right\}.
\]
\end{theorem}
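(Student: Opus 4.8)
The plan is to exploit the block-diagonal structure of the Laplacian matrix under disjoint unions. If ${\mathcal{G}}$ is the disjoint union of graphs ${\mathcal{G}}_1, \dots, {\mathcal{G}}_r$, then, after ordering the vertices of ${\mathcal{G}}$ component by component, both $D({\mathcal{G}})$ and $A({\mathcal{G}})$ — and hence $L({\mathcal{G}}) = D({\mathcal{G}}) - A({\mathcal{G}})$ — are block-diagonal with blocks $L({\mathcal{G}}_1), \dots, L({\mathcal{G}}_r)$. Consequently the characteristic polynomial of $L({\mathcal{G}})$ is the product of those of the $L({\mathcal{G}}_j)$, so $\L-spec({\mathcal{G}})$ is the multiset union of the $\L-spec({\mathcal{G}}_j)$. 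I would state and briefly justify this as the first step, since it is the only structural input needed.

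Next I would specialize the components to complete graphs. By the fact recalled just before the theorem, $\L-spec(K_{m_i}) = \{0^1, m_i^{m_i - 1}\}$. Taking $l_i$ disjoint copies and applying the union principle from the first step, the contribution of $l_i K_{m_i}$ to $\L-spec({\mathcal{G}})$ is $\{0^{l_i}, m_i^{l_i(m_i - 1)}\}$. Summing over $i = 1, \dots, k$, the eigenvalue $0$ is contributed $\sum_{i=1}^k l_i$ times in total, while for each $i$ the value $m_i$ is contributed exactly $l_i(m_i - 1)$ times; since the $m_i$ are pairwise distinct these contributions do not merge, which yields the claimed list.

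The only points requiring care are bookkeeping ones rather than genuine obstacles: first, that the ordering $0 < m_1 < m_2 < \cdots < m_k$ matches the convention $\alpha_1 < \cdots < \alpha_n$ used for writing $\L-spec$, which holds provided $m_1 \geq 2$; and second, the degenerate case $m_1 = 1$, where $K_1$ contributes only the eigenvalue $0$ and the symbol $m_1^{l_1(m_1 - 1)} = 1^0$ is vacuous, so the stated formula remains correct once empty multiplicities are discarded. I would dispatch both remarks in a sentence at the end. No step here is expected to be difficult; the content is entirely in the block-diagonal reduction together with the known spectrum of $K_n$.
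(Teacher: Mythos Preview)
Your argument is correct and is the standard one: the Laplacian of a disjoint union is block-diagonal, so its spectrum is the multiset union of the component spectra, and combining this with the known $\L-spec(K_n) = \{0^1, n^{n-1}\}$ gives the result. The paper, however, does not actually prove this theorem; it is stated in the preliminaries section as a known fact alongside the cited result $\L-spec(K_n) = \{0^1, n^{n-1}\}$, and is used only as input to Corollary~\ref{L-spec-3}. So there is no paper proof to compare against, but your proposal supplies exactly the natural justification one would expect, and your remarks on the $m_1 = 1$ edge case and the ordering convention are appropriate housekeeping.
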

\begin{theorem} {\rm \cite[Theorem 3.6]{M091}}
Let $\mathcal{G}$ be a graph  such that $\L-spec({\mathcal{G}}) = \{ \alpha_1^{a_1}, \alpha_2^{a_2}$, \dots, $\alpha_n^{a_n}\}$ then $\L-spec(\overline{\mathcal{G}})$ is given by
\[
\{0, (|V(\mathcal{G})| - \alpha_n)^{a_n}, (|V(\mathcal{G})| - \alpha_{n-1})^{a_{n-1}}, (|V(\mathcal{G})| - \alpha_{n-2})^{a_{n-2}}, \dots, (|V(\mathcal{G})|- \alpha_{1})^{a_{1} -1}\}.
\]
\end{theorem}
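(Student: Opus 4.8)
The plan is to use the standard relation between the Laplacian of a graph and that of its complement. Write $N = |V(\mathcal{G})|$, let $I$ be the $N \times N$ identity matrix, $J$ the $N \times N$ all-ones matrix, and $\mathbf{1}$ the all-ones column vector of length $N$. Every pair of distinct vertices is adjacent in exactly one of $\mathcal{G}$ and $\overline{\mathcal{G}}$, so $A(\mathcal{G}) + A(\overline{\mathcal{G}}) = J - I$, and each vertex has total degree $N - 1$ across the two graphs, so $D(\mathcal{G}) + D(\overline{\mathcal{G}}) = (N-1)I$. Subtracting gives
\[
L(\mathcal{G}) + L(\overline{\mathcal{G}}) = (N-1)I - (J - I) = NI - J = L(K_N),
\]
which is consistent with $\L-spec(K_N) = \{0^1, N^{N-1}\}$ recorded above.

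Next I would diagonalize the two Laplacians simultaneously. For any graph on $N$ vertices the all-ones vector lies in the kernel of its Laplacian, so $0$ is always an eigenvalue; in particular $\alpha_1 = 0$ in the given list. Since $L(\mathcal{G})$ is real symmetric and $\mathbf{1}$ is an eigenvector of it, the hyperplane $\mathbf{1}^{\perp}$ is $L(\mathcal{G})$-invariant, so there is an orthonormal basis $\left\{\tfrac{1}{\sqrt{N}}\mathbf{1}, v_2, \dots, v_N\right\}$ of $\mathbb{R}^N$ consisting of eigenvectors of $L(\mathcal{G})$, say $L(\mathcal{G}) v_j = \mu_j v_j$, with $v_2, \dots, v_N \in \mathbf{1}^{\perp}$. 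Since $J v_j = \mathbf{1}(\mathbf{1}^{\top} v_j) = 0$ for $j \ge 2$, the identity above yields $L(\overline{\mathcal{G}}) v_j = (NI - J - L(\mathcal{G})) v_j = (N - \mu_j) v_j$, while $L(\overline{\mathcal{G}}) \mathbf{1} = 0$. Hence the same basis diagonalizes $L(\overline{\mathcal{G}})$, and as a multiset its spectrum is $\{0\} \cup \{\, N - \mu_j : 2 \le j \le N \,\}$.

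Finally I would carry out the multiplicity bookkeeping. Among the $N - 1$ eigenvalues $\mu_2, \dots, \mu_N$ attached to $\mathbf{1}^{\perp}$, the value $\alpha_1 = 0$ occurs $a_1 - 1$ times (one of the $a_1$ copies of the eigenvalue $0$ of $L(\mathcal{G})$ having been used up by $\mathbf{1}$ itself), while each $\alpha_i$ with $2 \le i \le n$ occurs its full $a_i$ times. Applying $\mu \mapsto N - \mu$ and using $N - \alpha_n < N - \alpha_{n-1} < \cdots < N - \alpha_1 = N$, we obtain
\[
\L-spec(\overline{\mathcal{G}}) = \{\, 0,\ (N - \alpha_n)^{a_n},\ (N - \alpha_{n-1})^{a_{n-1}},\ \dots,\ (N - \alpha_2)^{a_2},\ (N - \alpha_1)^{a_1 - 1} \,\},
\]
which is the asserted description with $N = |V(\mathcal{G})|$. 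The step that genuinely needs care is the invariant-subspace observation, since it is what licenses treating $L(\mathcal{G})$ and $L(\overline{\mathcal{G}})$ as simultaneously diagonalizable; after that, everything is substitution and counting. It is also worth flagging the borderline case $\alpha_n = N$ (equivalently, $\overline{\mathcal{G}}$ disconnected): then the term $(N - \alpha_n)^{a_n}$ simply merges with the isolated $0$, and the displayed equality of multisets remains valid.
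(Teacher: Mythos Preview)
Your argument is correct and is precisely the standard proof of this classical fact: derive $L(\mathcal{G}) + L(\overline{\mathcal{G}}) = NI - J$, use symmetry of $L(\mathcal{G})$ together with the common eigenvector $\mathbf{1}$ to simultaneously diagonalize, and then track multiplicities. The only cosmetic point is that your ``invariant-subspace observation'' is immediate from symmetry (the orthogonal complement of any eigenvector of a real symmetric operator is invariant), so there is nothing delicate there.

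As for comparison with the paper: the paper does not prove this statement at all. It is quoted as \cite[Theorem~3.6]{M091} (Mohar's survey on Laplacian spectra) and used as a black box to derive Corollary~\ref{L-spec-3}. So there is no ``paper's own proof'' to compare against; your write-up supplies exactly the argument one finds in Mohar or any standard reference, and it is complete as written.
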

As a corollary of the above two theorems we have the following result.
\begin{corollary}\label{L-spec-3}
If $\mathcal{G} = l_1K_{m_1}\sqcup l_2K_{m_2}\sqcup\cdots  \sqcup l_kK_{m_k}$, where $l_iK_{m_i}$ denotes the disjoint union of $l_i$ copies of  $K_{m_i}$ for $1 \leq i \leq k$ and $m_1 < m_2 < \cdots < m_k$, then
\begin{align*}
\L-spec(\overline{\mathcal{G}}) =  & \{0, \left(\sum_{i=1}^k l_im_i - m_k\right)^{l_k(m_k - 1)}, \left(\sum_{i=1}^k l_im_i - m_{k - 1}\right)^{l_{k-1}(m_{k-1} - 1)},\\
&  \dots, \left(\sum_{i=1}^k l_im_i - m_{1}\right)^{l_{1}(m_{1} - 1)}, \left(\sum_{i=1}^k l_im_i\right)^{\sum_{i=1}^k l_i - 1}\}.
\end{align*}
\end{corollary}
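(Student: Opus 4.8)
The plan is to obtain Corollary \ref{L-spec-3} by feeding the Laplacian spectrum of $\mathcal{G} = l_1K_{m_1}\sqcup\cdots\sqcup l_kK_{m_k}$, which is supplied by Theorem~2.1, into the complement formula of Theorem~2.2. First I would record the ingredients: by Theorem~2.1 we have $\L-spec(\mathcal{G}) = \{0^{a}, m_1^{l_1(m_1-1)}, \dots, m_k^{l_k(m_k-1)}\}$ with $a = \sum_{i=1}^k l_i$, and the total number of vertices is $n := |V(\mathcal{G})| = \sum_{i=1}^k l_i m_i$. Note that since $m_1 < m_2 < \cdots < m_k$, the distinct eigenvalues of $L(\mathcal{G})$ in increasing order are $\alpha_1 = 0 < \alpha_2 = m_1 < \cdots < \alpha_{k+1} = m_k$, with multiplicities $a_1 = a$ and $a_{j+1} = l_j(m_j-1)$ for $1 \le j \le k$ (one should check $m_j - 1 \geq 1$, i.e. discard any trivial $K_1$ factors, or simply note such terms contribute eigenvalue multiplicity zero and are harmless).

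Next I would apply Theorem~2.2 verbatim. That theorem takes a graph with $\L-spec = \{\alpha_1^{a_1},\dots,\alpha_n^{a_n}\}$ (here "$n$" in the theorem plays the role of our "$k+1$") and returns $\L-spec(\overline{\mathcal{G}}) = \{0,\ (n-\alpha_{k+1})^{a_{k+1}},\ (n-\alpha_k)^{a_k},\ \dots,\ (n-\alpha_2)^{a_2},\ (n-\alpha_1)^{a_1-1}\}$, where $n$ now denotes $|V(\mathcal{G})|$. Substituting $\alpha_1 = 0$, $\alpha_{j+1} = m_j$, $a_1 = \sum_i l_i$, $a_{j+1} = l_j(m_j-1)$, and $|V(\mathcal{G})| = \sum_{i=1}^k l_i m_i$ gives exactly the eigenvalues $\left(\sum_i l_i m_i - m_k\right)^{l_k(m_k-1)}$ down through $\left(\sum_i l_i m_i - m_1\right)^{l_1(m_1-1)}$, together with the top eigenvalue $\left(\sum_i l_i m_i\right)^{\sum_i l_i - 1}$ coming from the $(n-\alpha_1)^{a_1-1} = (n-0)^{(\sum_i l_i) - 1}$ term, plus the single $0$. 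This is precisely the claimed expression, so the corollary follows.

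There is essentially no obstacle here — the proof is a direct substitution. The only points requiring a moment's care are bookkeeping ones: verifying that the relabelled eigenvalues of $\mathcal{G}$ are genuinely in strictly increasing order so that Theorem~2.2 applies as stated (this is immediate from $0 < m_1 < \cdots < m_k$, assuming all $m_i \geq 1$ and, to have nonzero multiplicities, ignoring isolated vertices where $m_i = 1$), and checking that the arithmetic $|V(\mathcal{G})| = \sum_{i=1}^k l_i m_i$ is inserted correctly in every coordinate. I would present the argument in two or three sentences: state the spectrum of $\mathcal{G}$ from Theorem~2.1, note $|V(\mathcal{G})| = \sum_{i=1}^k l_i m_i$, and then invoke Theorem~2.2 to read off $\L-spec(\overline{\mathcal{G}})$.
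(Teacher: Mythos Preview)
Your proposal is correct and follows exactly the approach the paper intends: the paper presents this as an immediate corollary of Theorems~2.1 and~2.2 without further argument, and your substitution of the spectrum from Theorem~2.1 together with $|V(\mathcal{G})| = \sum_{i=1}^k l_i m_i$ into the complement formula of Theorem~2.2 is precisely the intended derivation. The bookkeeping remarks you include (ordering of eigenvalues, handling of $m_i = 1$) are more detail than the paper provides but are harmless and accurate.
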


   A group $G$ is called an AC-group if $C_G(x)$ is abelian for all $x \in G\setminus Z(G)$. Various aspects of AC-groups can be found in \cite{Ab06, das13, Roc75}. The following result gives the Laplacian spectrum of the non-commuting graph of a finite non-abelian AC-group.
\begin{theorem}\label{AC-group}
Let $G$ be a finite non-abelian   AC-group.  Then
\begin{align*}
\L-spec({\mathcal{A}}_G) =
 & \{0, (|G| - |X_n|)^{|X_n| - |Z(G)| - 1}, \dots,\\
 & (|G| - |X_1|)^{|X_1| - |Z(G)| - 1}, (|G| - |Z(G)|)^{n - 1}\}.
\end{align*}
where $X_1,\dots, X_n$ are the distinct centralizers of non-central elements of $G$ such that $|X_1| \leq \cdots \leq |X_n|$.
\end{theorem}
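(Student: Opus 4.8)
The plan is to realize the non-commuting graph ${\mathcal{A}}_G$ of a finite non-abelian AC-group as the complement of a disjoint union of complete graphs, and then apply Corollary \ref{L-spec-3}. The key structural observation is that in an AC-group, the relation ``$x$ and $y$ are \emph{not} adjacent in ${\mathcal{A}}_G$'' (i.e.\ $xy = yx$), restricted to $G \setminus Z(G)$, is an equivalence relation: if $x$ commutes with $y$ then, $C_G(x)$ being abelian and containing $y$, we get $C_G(x) \subseteq C_G(y)$ and by symmetry $C_G(x) = C_G(y)$; hence commuting is transitive on non-central elements. Consequently the equivalence classes are precisely the sets $X_i \setminus Z(G)$, where $X_1, \dots, X_n$ are the distinct centralizers of non-central elements, and two non-central elements commute if and only if they lie in the same $X_i$.

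From here, first I would argue that the complement $\overline{{\mathcal{A}}_G}$ is exactly the disjoint union $\bigsqcup_{i=1}^n K_{|X_i| - |Z(G)|}$: within each class $X_i \setminus Z(G)$ all pairs commute (so they form a clique in $\overline{{\mathcal{A}}_G}$), while elements from different classes do not commute (so there are no edges between classes in $\overline{{\mathcal{A}}_G}$). Note each $|X_i| - |Z(G)| \geq 1$ since $X_i$ properly contains $Z(G)$. Next I would record that $\sum_{i=1}^n (|X_i| - |Z(G)|) = |G| - |Z(G)| = |V({\mathcal{A}}_G)|$, since the classes partition $G \setminus Z(G)$. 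Then I would invoke Corollary \ref{L-spec-3} with the multiset of clique sizes $\{|X_i| - |Z(G)| : 1 \leq i \leq n\}$ (grouping equal values into the $l_j K_{m_j}$ form): each clique of size $|X_i| - |Z(G)|$ contributes the eigenvalue $|G| - |Z(G)| - (|X_i| - |Z(G)|) = |G| - |X_i|$ with multiplicity $|X_i| - |Z(G)| - 1$, there is a single eigenvalue $0$, and the eigenvalue $\sum(|X_i| - |Z(G)|) = |G| - |Z(G)|$ appears with multiplicity $n - 1$. Ordering the $X_i$ so that $|X_1| \leq \cdots \leq |X_n|$ (hence $|G| - |X_1| \geq \cdots \geq |G| - |X_n|$) gives exactly the claimed list.

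The main obstacle — really the only non-formal point — is establishing that commutativity is transitive on $G \setminus Z(G)$, i.e.\ that the AC-hypothesis forces distinct centralizers of non-central elements to be disjoint outside $Z(G)$; everything after that is a bookkeeping translation through Corollary \ref{L-spec-3}. One mild subtlety to handle carefully is the case $|X_i| - |Z(G)| = 1$ for some $i$, where the corresponding clique $K_1$ contributes no ``$(|G| - |X_i|)$'' eigenvalue (multiplicity $|X_i| - |Z(G)| - 1 = 0$); this is consistent with the exponents written in the statement, so no separate treatment is needed, but I would remark on it. I would also note explicitly that $n \geq 2$ (a non-abelian group cannot have all non-central elements sharing one centralizer, as that would make the centralizer abelian and normal of index forcing commutativity), so the multiplicity $n-1$ of the top eigenvalue is positive and the spectrum is genuinely as displayed.
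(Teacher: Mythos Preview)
Your proposal is correct and follows essentially the same route as the paper: both arguments use the AC-hypothesis to show that the distinct centralizers $X_i$ of non-central elements satisfy $X_i \cap X_j = Z(G)$ for $i \neq j$ (your phrasing via ``commuting is an equivalence relation on $G\setminus Z(G)$'' is equivalent to the paper's direct verification), deduce $\overline{{\mathcal{A}}_G} = \bigsqcup_{i=1}^n K_{|X_i|-|Z(G)|}$, and then apply Corollary~\ref{L-spec-3} together with $\sum_i(|X_i|-|Z(G)|)=|G|-|Z(G)|$. Your additional remarks on the degenerate multiplicity case $|X_i|-|Z(G)|=1$ and on $n\geq 2$ are not in the paper but are harmless clarifications.
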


\begin{proof}
Let $G$ be a finite non-abelian   AC-group and   $X_i = C_G(x_i)$ where $x_i \in G \setminus Z(G)$ and $1\leq i \leq n$. Let $x, y \in X_i \setminus Z(G)$  for some $i$ and $x \ne y$  then, since $G$ an
AC-group, there is an edge between $x$ and $y$ in $\overline{{\mathcal{A}}_G}$.  Suppose that $x \in (X_i\cap X_j)\setminus Z(G)$ for some $1\leq i \ne j \leq n$. Then  $[x, x_i] = 1$ and $[x, x_j] = 1$. Let $s \in C_G(x)$ then $[s, x_i] = 1$ since $x_i \in C_G(x)$ and $G$ is an AC-group. Therefore, $s \in C_G(x_i)$ and so $C_G(x) \subseteq C_G(x_i)$.
Again, let  $t \in C_G(x_i)$ then $[t, x] = 1$ since $x  \in C_G(x_i)$ and $G$ is an AC-group. Therefore, $t \in C_G(x)$ and so $C_G(x_i) \subseteq C_G(x)$. Thus $C_G(x) = C_G(x_i)$.  Similarly, it can be seen that $C_G(x) = C_G(x_j)$, which is a contradiction.
Therefore, $X_i\cap X_j = Z(G)$ for any $1\leq i \ne j \leq n$. This shows that
\begin{equation}\label{ducomgraph}
\overline{{\mathcal{A}}_G} = \overset{n}{\underset{i = 1}{\sqcup}}K_{|X_i|-|Z(G)|}.
\end{equation}
Therefore, by Corollary \ref{L-spec-3}, we have
\begin{align*}
&\L-spec({\mathcal{A}}_G) =
  \{0, \left(\sum_{i = 1}^n(|X_i| - |Z(G)|) - (|X_n| - |Z(G)|)\right)^{|X_n| - |Z(G)| - 1}, \dots,\\
 & \left(\sum_{i = 1}^n(|X_i| - |Z(G)|) - (|X_1| - |Z(G)|)\right)^{|X_1| - |Z(G)| - 1}, \left(\sum_{i = 1}^n(|X_i| - |Z(G)|)\right)^{n - 1}\}.
\end{align*}
Hence, the result follows noting that $\underset{i = 1}{\overset{n}{\sum}}(|X_i| - |Z(G)|) = |G| - |Z(G)|$.
\end{proof}



\begin{corollary}\label{AC-cor}
Let $G$ be a finite non-abelian  AC-group and $A$ be any finite abelian group.  Then
\begin{align*}
\L-spec({\mathcal{A}}_{G\times A}) =&
  \{0, (|A|(|G| - |X_n|))^{|A|(|X_n| - |Z(G)|) - 1}, \dots,\\
 & (|A|(|G| - |X_1|))^{|A|(|X_1| - |Z(G)|) - 1}, (|A|(|G| - |Z(G)|))^{n - 1}\}.
\end{align*}
where $X_1,\dots, X_n$ are the distinct centralizers of non-central elements of $G$ such that $|X_1| \leq \cdots \leq |X_n|$.
\end{corollary}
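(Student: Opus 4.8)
The plan is to deduce this directly from Theorem~\ref{AC-group} by applying that theorem to the group $G\times A$ instead of to $G$. The underlying observation is that multiplying a finite non-abelian AC-group by a finite abelian group $A$ neither destroys the AC-property nor creates or merges centralizers of non-central elements; it simply rescales all the relevant cardinalities by the factor $|A|$.

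First I would record the elementary structural facts about $G\times A$. Since $G$ is non-abelian, so is $G\times A$. For any $(x,a)\in G\times A$ one has $C_{G\times A}((x,a)) = C_G(x)\times A$ (using that $A$ is abelian, so $C_A(a)=A$), and consequently $Z(G\times A) = Z(G)\times A$. In particular $(x,a)\notin Z(G\times A)$ if and only if $x\notin Z(G)$, and in that case $C_G(x)$ is abelian because $G$ is an AC-group, whence $C_{G\times A}((x,a)) = C_G(x)\times A$ is abelian. Thus $G\times A$ is again a finite non-abelian AC-group. Moreover, as $(x,a)$ ranges over the non-central elements of $G\times A$, the centralizer $C_G(x)\times A$ ranges exactly over $X_1\times A,\dots,X_n\times A$; these are pairwise distinct (since the $X_i$ are), they all properly contain $Z(G)\times A = Z(G\times A)$, and $|X_1\times A|\le\cdots\le|X_n\times A|$ because $|X_1|\le\cdots\le|X_n|$. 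Hence $G\times A$ has precisely $n$ distinct centralizers of non-central elements, namely $X_1\times A,\dots,X_n\times A$.

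Next I would simply substitute into Theorem~\ref{AC-group} applied to $G\times A$. Using $|G\times A| = |A||G|$, $|Z(G\times A)| = |A||Z(G)|$ and $|X_i\times A| = |A||X_i|$, the eigenvalue $|G\times A| - |X_i\times A|$ becomes $|A|(|G|-|X_i|)$, its multiplicity $|X_i\times A| - |Z(G\times A)| - 1$ becomes $|A|(|X_i|-|Z(G)|) - 1$, and the eigenvalue $|G\times A| - |Z(G\times A)|$ of multiplicity $n-1$ becomes $|A|(|G|-|Z(G)|)$ of multiplicity $n-1$. Reading off the resulting list yields exactly the claimed description of $\L-spec(\mathcal{A}_{G\times A})$.

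There is no serious obstacle here; the only step requiring a little care is the structural one, namely verifying that $G\times A$ is an AC-group and that passing from $G$ to $G\times A$ introduces no new centralizers of non-central elements and identifies none of the old ones. After that the corollary is a one-line substitution; the inequality $|A|(|X_i|-|Z(G)|)-1\ge 0$ needed for the multiplicities to make sense is immediate from $|X_i| > |Z(G)|$.
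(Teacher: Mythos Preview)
Your proposal is correct and follows exactly the same route as the paper: one checks that $G\times A$ is again a finite non-abelian AC-group with $Z(G\times A)=Z(G)\times A$ and with distinct centralizers of non-central elements precisely $X_1\times A,\dots,X_n\times A$, and then applies Theorem~\ref{AC-group} to $G\times A$. The paper states these structural facts without justification, so your version is simply a more detailed rendering of the same argument.
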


\begin{proof}
It is easy to see that $G\times A$ is  an  AC-group and  $X_1\times A,  X_2 \times A,\dots, X_n \times A$  are the distinct centralizers of non-central elements of $G\times A$. Hence, the result follows from Theorem \ref{AC-group} noting that $Z(G\times A) = Z(G)\times A$.  \end{proof}

\section{Groups with given central factors}

In this section, we compute the  Laplacian spectrum of the  non-commuting graphs of some  families of finite non-abelian groups  whose central factors are some well-known finite  groups. We begin with the following.

\begin{theorem} \label{order-20}
Let $G$ be a finite group and $\frac{G}{Z(G)} \cong Sz(2)$, where $Sz(2)$ is the Suzuki group presented by $\langle a, b : a^5 = b^4 = 1, b^{-1}ab = a^2 \rangle$. Then
\[
\L-spec({\mathcal{A}}_G) =\{  0,  {(15|Z(G)|)}^{4|Z(G)|-1},  {(16|Z(G)|)}^{15|Z(G)|-5},  {(19|Z(G)|)}^5 \}.
\]
\end{theorem}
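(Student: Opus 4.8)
The plan is to exploit the hypothesis $\frac{G}{Z(G)}\cong Sz(2)$ to show that $G$ is an AC-group with a completely rigid system of centralizers, and then to read off the spectrum from Theorem \ref{AC-group}.

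First I would record the structure of $Sz(2)\cong \mathbb{Z}_5\rtimes\mathbb{Z}_4$: it is a Frobenius group of order $20$ with a normal (cyclic) Sylow $5$-subgroup $P$ and five (cyclic) Sylow $2$-subgroups $Q_1,\dots,Q_5$ that pairwise intersect trivially. Because the action of the complement on $P$ is faithful, the centralizer in $Sz(2)$ of any non-identity element is exactly the unique maximal abelian subgroup containing it, namely $P$ for an element of order $5$ and the relevant $Q_i$ for an element of order $2$ or $4$; this is a short order-counting check in each case.

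Next I would transfer this to $G$. Fix $x\in G\setminus Z(G)$, write $\bar x$ for its image in $Sz(2)$, let $M\in\{P,Q_1,\dots,Q_5\}$ be the maximal abelian subgroup of $Sz(2)$ containing $\bar x$, and let $H\le G$ be the full preimage of $M$. Since $Z(G)\le Z(H)$ and $H/Z(G)\cong M$ is cyclic, $H/Z(H)$ is cyclic, hence $H$ is abelian, so $H\subseteq C_G(x)$. Conversely any element of $C_G(x)$ projects into $C_{Sz(2)}(\bar x)=M$, so $C_G(x)\subseteq H$. Therefore $C_G(x)=H$ is abelian; thus $G$ is an AC-group, and the distinct centralizers of non-central elements of $G$ are precisely the preimages of $P$ and of $Q_1,\dots,Q_5$. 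These have orders $5|Z(G)|$ (one of them) and $4|Z(G)|$ (five of them), and they pairwise intersect in $Z(G)$ since $P,Q_1,\dots,Q_5$ pairwise intersect trivially; the identity $5\bigl(4|Z(G)|-|Z(G)|\bigr)+\bigl(5|Z(G)|-|Z(G)|\bigr)=19|Z(G)|=|G|-|Z(G)|$ confirms there are no others.

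Finally I would apply Theorem \ref{AC-group} with $n=6$ and $|X_1|=\cdots=|X_5|=4|Z(G)|<|X_6|=5|Z(G)|$: the five centralizers of order $4|Z(G)|$ each contribute the eigenvalue $|G|-4|Z(G)|=16|Z(G)|$ with multiplicity $3|Z(G)|-1$, for total multiplicity $15|Z(G)|-5$; the centralizer of order $5|Z(G)|$ contributes $|G|-5|Z(G)|=15|Z(G)|$ with multiplicity $4|Z(G)|-1$; and $|G|-|Z(G)|=19|Z(G)|$ occurs with multiplicity $n-1=5$, together with the eigenvalue $0$. This is exactly the asserted spectrum. The only genuinely delicate step is the identification $C_G(x)=H$ — i.e.\ that the preimages in $G$ of the maximal abelian subgroups of $Sz(2)$ are themselves abelian and that $C_G(x)$ cannot be strictly larger; once that is settled, the rest is bookkeeping inside Theorem \ref{AC-group}.
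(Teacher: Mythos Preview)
Your proof is correct and follows the same overall strategy as the paper: show that $G$ is an AC-group whose non-central centralizers are exactly the preimages of the six maximal abelian subgroups of $Sz(2)$, then invoke Theorem \ref{AC-group}. The only difference is in how the centralizers are identified. The paper simply writes out the six centralizers $C_G(a), C_G(b), C_G(ab), C_G(a^2b), C_G(a^3b), C_G(a^2b^3)$ as explicit unions of $Z(G)$-cosets and observes directly that they are abelian of the stated orders. Your argument is more structural: you use the Frobenius structure of $Sz(2)$ to pin down $C_{Sz(2)}(\bar x)$, and then the fact that the preimage $H$ of a cyclic subgroup satisfies $H/Z(H)$ cyclic (hence $H$ abelian) to conclude $C_G(x)=H$. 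Your route avoids any explicit coset bookkeeping and would transfer verbatim to any group whose central quotient is a Frobenius group with cyclic kernel and cyclic complement; the paper's route is more hands-on but makes the six centralizers completely explicit. Either way the input to Theorem \ref{AC-group} is the same, and the spectrum computation you give matches the paper's.
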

\begin{proof}
We have
\[
\frac{G}{Z(G)} = \langle aZ(G), bZ(G) : a^5Z(G) = b^4Z(G) = Z(G), b^{-1}abZ(G) = a^2Z(G) \rangle.
\]
 Observe that
\[
\begin{array}{ll}
C_G(ab) &= Z(G)\sqcup abZ(G) \sqcup a^4b^2Z(G)\sqcup a^3b^3Z(G),\\
C_G(a^2b) &= Z(G)\sqcup a^2bZ(G) \sqcup a^3b^2Z(G)\sqcup ab^3Z(G),\\
C_G(a^2b^3) &= Z(G)\sqcup a^2b^3Z(G) \sqcup ab^2Z(G)\sqcup a^4bZ(G),\\
C_G(b) &= Z(G)\sqcup bZ(G) \sqcup b^2Z(G)\sqcup b^3Z(G),\\
C_G(a^3b) &= Z(G)\sqcup a^3bZ(G) \sqcup a^2b^2Z(G)\sqcup a^4b^3Z(G) \quad \text{ and }\\
C_G(a)  &= Z(G)\sqcup aZ(G) \sqcup a^2Z(G)\sqcup a^3Z(G)\sqcup a^4Z(G)\\
\end{array}
\]
are the only centralizers of non-central elements of $G$. Also note that these centralizers are abelian subgroups of $G$. Thus $G$ is an  AC-group. 
We have $|C_G(a)| = 5|Z(G)|$ and
\[
|C_G(ab)| =  |C_G(a^2b)| =  |C_G(a^2b^3)| =  |C_G(b)| =  |C_G(a^3b)| = 4|Z(G)|.
\]
 Therefore, by   Theorem \ref{AC-group}, the result follows.
\end{proof}

\begin{theorem}\label{main2}
Let $G$ be a finite group such that $\frac{G}{Z(G)} \cong {\mathbb{Z}}_p \times {\mathbb{Z}}_p$, where $p$ is a prime integer. Then
\[
\L-spec({\mathcal{A}}_G) = \{ 0,  {((p^2-p)|Z(G)|)}^{(p^2-1)|Z(G)|-p-1},  {((p^2-1)|Z(G)|)}^p \}.
\]
\end{theorem}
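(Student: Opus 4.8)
The plan is to reduce Theorem~\ref{main2} to Theorem~\ref{AC-group} by showing that any group $G$ with $\frac{G}{Z(G)} \cong {\mathbb{Z}}_p \times {\mathbb{Z}}_p$ is an AC-group whose non-central centralizers are all of the same size. First I would recall the standard fact that if $\frac{G}{Z(G)}$ is abelian then for each $x \in G \setminus Z(G)$ the centralizer $C_G(x)$ properly contains $Z(G)$, so $Z(G) \subsetneq C_G(x) \subseteq G$; since the index $[G:Z(G)] = p^2$ and $[C_G(x):Z(G)]$ divides $p^2$ but cannot equal $1$ or $p^2$ (the latter would force $x \in Z(G)$), we get $[C_G(x):Z(G)] = p$, i.e. $|C_G(x)| = p|Z(G)|$ for every non-central $x$.

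Next I would verify that $G$ is an AC-group: since $|C_G(x)|/|Z(G)| = p$ is prime, the quotient $\frac{C_G(x)}{Z(G)}$ is cyclic, and a group that is cyclic modulo its centre is abelian, so $C_G(x)$ is abelian. Thus $G$ is a non-abelian AC-group and Theorem~\ref{AC-group} applies. It then remains to count the distinct non-central centralizers. Each such centralizer $X_i$ corresponds to a subgroup $\frac{X_i}{Z(G)}$ of order $p$ in $\frac{G}{Z(G)} \cong {\mathbb{Z}}_p \times {\mathbb{Z}}_p$, and conversely every order-$p$ subgroup of ${\mathbb{Z}}_p \times {\mathbb{Z}}_p$ arises this way (if $\bar{H}$ has order $p$, pick $x$ with $xZ(G)$ generating $\bar H$; then $\bar H \subseteq \frac{C_G(x)}{Z(G)}$, and the latter has order $p$, so they coincide). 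Since ${\mathbb{Z}}_p \times {\mathbb{Z}}_p$ has exactly $p+1$ subgroups of order $p$, we have $n = p+1$ distinct centralizers $X_1, \dots, X_{p+1}$, each with $|X_i| = p|Z(G)|$.

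Finally I would substitute into the formula of Theorem~\ref{AC-group}. With $|G| = p^2|Z(G)|$, $|X_i| = p|Z(G)|$ for all $i$, and $n = p+1$, the eigenvalue $|G| - |X_i|$ becomes $p^2|Z(G)| - p|Z(G)| = (p^2-p)|Z(G)|$ with multiplicity $\sum_i (|X_i| - |Z(G)|) - (p+1)\cdot 0$... more precisely each block contributes multiplicity $|X_i| - |Z(G)| - 1 = (p-1)|Z(G)| - 1$, and there are $p+1$ such identical blocks, giving total multiplicity $(p+1)((p-1)|Z(G)| - 1) = (p^2-1)|Z(G)| - (p+1)$. The eigenvalue $|G| - |Z(G)| = (p^2-1)|Z(G)|$ has multiplicity $n - 1 = p$, and $0$ has multiplicity $1$. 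This matches the claimed spectrum, and a quick check that the multiplicities sum to $|G| - |Z(G)| = (p^2-1)|Z(G)|$ confirms the bookkeeping.

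I do not anticipate a serious obstacle here; the only mildly delicate points are the two index arguments (that $[C_G(x):Z(G)] = p$ exactly, and that every order-$p$ subgroup of the central quotient is realized as such a centralizer quotient), both of which are routine once one invokes that $G/Z(G)$ abelian forces every centralizer to strictly contain $Z(G)$. Care must also be taken to note that the $p+1$ centralizers are genuinely distinct — this follows because distinct order-$p$ subgroups of ${\mathbb{Z}}_p\times{\mathbb{Z}}_p$ intersect trivially, so the corresponding $X_i$ intersect in exactly $Z(G)$ — which is consistent with the general AC-group picture established in the proof of Theorem~\ref{AC-group}.
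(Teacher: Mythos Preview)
Your proposal is correct and follows essentially the same approach as the paper: both show that $G$ is an AC-group with exactly $p+1$ non-central centralizers, each of order $p|Z(G)|$, and then apply Theorem~\ref{AC-group}. The only difference is cosmetic---the paper verifies these facts by fixing a presentation of $G/Z(G)$ and explicitly listing the centralizers as $C_G(a)$ and $C_G(a^jb)$ for $1\le j\le p$, whereas you obtain the same conclusions via index arguments and the count of order-$p$ subgroups in ${\mathbb Z}_p\times{\mathbb Z}_p$.
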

\begin{proof}
Let $|Z(G)| = n$ then since $\frac{G}{Z(G)}\cong {\mathbb{Z}}_p\times {\mathbb{Z}}_p$ we have  $\frac{G}{Z(G)} = \langle aZ(G), bZ(G) : a^p, b^p, aba^{-1}b^{-1} \in Z(G)\rangle$, where $a, b \in G$ with $ab \ne ba$. Then for any $z \in Z(G)$, we have
\begin{align*}
C_G(a) &= C_G(a^iz) \,\,\, = Z(G) \sqcup aZ(G) \sqcup \cdots \sqcup a^{p -1}Z(G) \text{ for } 1 \leq i \leq p - 1,\\
C_G(a^jb) &= C_G(a^jbz) = Z(G) \sqcup a^jbZ(G) \sqcup \cdots \sqcup a^{(p -1)j}b^{p - 1}Z(G) \text{ for } 1 \leq j \leq p.
\end{align*}
These are the only  centralizers of non-central elements of $G$. Also note that these centralizers are abelian subgroups of $G$. Therefore, $G$ is an AC-group.
We have $|C_G(a)| =  |C_G(a^jb)| = pn$ for $1 \leq j \leq p$.
Hence, the result follows from Theorem \ref{AC-group}.
 \end{proof}
\noindent As a corollary we have the following result.
\begin{corollary}
Let $G$ be a non-abelian group of order $p^3$, for any prime $p$, then
\[
\L-spec({\mathcal{A}}_G) = \{  0 , {(p^3-p^2)}^{p^3-2p-1} , {(p^3-p)}^p\}.
\]
\end{corollary}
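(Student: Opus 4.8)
The plan is to deduce this corollary directly from Theorem \ref{main2} by specializing to the case $|G| = p^3$. First I would recall the standard group-theoretic fact that if $G$ is a non-abelian group of order $p^3$, then $|Z(G)| = p$ and $\frac{G}{Z(G)} \cong {\mathbb{Z}}_p \times {\mathbb{Z}}_p$. Indeed, $Z(G)$ is nontrivial since $G$ is a $p$-group, and $Z(G) \ne G$ since $G$ is non-abelian; moreover $\frac{G}{Z(G)}$ cannot be cyclic (else $G$ would be abelian), so $|Z(G)| \ne p^2$, forcing $|Z(G)| = p$ and hence $\left|\frac{G}{Z(G)}\right| = p^2$. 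A group of order $p^2$ is abelian, and since it is not cyclic it must be isomorphic to ${\mathbb{Z}}_p \times {\mathbb{Z}}_p$.

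Having established $|Z(G)| = p$, I would simply substitute $|Z(G)| = p$ into the formula from Theorem \ref{main2}. The eigenvalue $(p^2 - p)|Z(G)|$ becomes $(p^2-p)p = p^3 - p^2$, with multiplicity $(p^2-1)|Z(G)| - p - 1 = (p^2-1)p - p - 1 = p^3 - p - p - 1 = p^3 - 2p - 1$. The eigenvalue $(p^2-1)|Z(G)|$ becomes $(p^2-1)p = p^3 - p$, with multiplicity $p$. Together with the eigenvalue $0$, this yields exactly
\[
\L-spec({\mathcal{A}}_G) = \{0, (p^3-p^2)^{p^3-2p-1}, (p^3-p)^p\},
\]
as claimed.

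There is essentially no obstacle here: the corollary is a pure substitution once the structure of $\frac{G}{Z(G)}$ is pinned down, and that structural fact is classical. The only point requiring a word of care is the implicit claim that $p^3 - 2p - 1 \ge 0$ so that the multiplicity makes sense as a genuine (positive, or at least nonnegative) count; for $p = 2$ this is $8 - 4 - 1 = 3 > 0$ and it is increasing in $p$, so this is never an issue. I would therefore present the proof as a short paragraph invoking Theorem \ref{main2} together with the determination of $Z(G)$.
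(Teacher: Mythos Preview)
Your proposal is correct and follows exactly the same approach as the paper: the paper's proof simply notes that $|Z(G)| = p$ and $\frac{G}{Z(G)} \cong {\mathbb{Z}}_p \times {\mathbb{Z}}_p$, then invokes Theorem~\ref{main2}. Your write-up just fills in the standard justification for these structural facts and carries out the substitution explicitly.
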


\begin{proof}
Note that $|Z(G)| = p$ and  $\frac{G}{Z(G)} \cong {\mathbb{Z}}_p \times {\mathbb{Z}}_p$. Hence the  result follows from Theorem \ref{main2}.
\end{proof}

\begin{theorem}\label{main4}
Let $G$ be a finite group such that $\frac{G}{Z(G)} \cong D_{2m}$, for $m \geq 2$. Then
\begin{align*}
\L-spec({\mathcal{A}_G}) = &\{ 0 , {(m|Z(G)|)}^{(m-1)|Z(G)|-1} , {(2(m-1)|Z(G)|)}^{m|Z(G)|-m},\\
&{((2m-1)|Z(G)|)}^m \}.
\end{align*}
\end{theorem}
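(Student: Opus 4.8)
The plan is to follow exactly the pattern established in Theorems \ref{order-20} and \ref{main2}: show that a group $G$ with $G/Z(G) \cong D_{2m}$ is an AC-group, identify all distinct centralizers of non-central elements together with their orders, and then quote Theorem \ref{AC-group}. Write $Z = Z(G)$ and $n = |Z|$, and use the presentation $G/Z = \langle aZ, bZ : a^mZ = b^2Z = Z,\ (ba)^2 \in Z \rangle$, so that $|G| = 2mn$. The cyclic subgroup generated by $aZ$ has order $m$ in $G/Z$, and the $m$ cosets of the form $a^ibZ$ are the involutions/reflections. I would first argue that $G$ is an AC-group by a standard computation: the centralizer $C_G(a)$ contains the full preimage $\langle a, Z\rangle$ of $\langle aZ\rangle$, which has order $mn$; since $aZ$ is noncentral in $D_{2m}$, this preimage is already a maximal abelian-by-center structure, and one checks no reflection commutes with $a$ unless forced, so $C_G(a)$ has order $mn$ (when $m \geq 3$; the case $m=2$ needs separate care since then $D_4 \cong \mathbb{Z}_2\times\mathbb{Z}_2$ and this reduces to Theorem \ref{main2} — I would remark on this, noting the stated formula should still agree). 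Similarly each reflection $a^ib$ has centralizer equal to the preimage of $\langle a^ibZ, \text{(the central square)} \rangle$; because $(a^ib)^2 \in Z$, the subgroup $\langle a^ib, Z\rangle$ has order $2n$, and these centralizers are abelian.

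Next I would enumerate the distinct centralizers. The preimage of $\langle aZ \rangle$ gives one centralizer $X$ of order $mn$, shared by all noncentral elements lying in $\bigcup_{i=1}^{m-1} a^iZ$ (there are $(m-1)n$ such elements, contributing multiplicity $(m-1)n - 1$ via Theorem \ref{AC-group}). For the reflections: in $D_{2m}$ the $m$ reflections split into conjugacy classes, but the relevant fact here is how many \emph{distinct} centralizer subgroups they yield. Since $C_G(a^ib)$ pulls back $\langle a^ibZ\rangle$, which is a two-element subgroup of $D_{2m}$, and distinct reflections generate distinct two-element subgroups, we get $m$ distinct centralizers each of order $2n$. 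The union of the noncentral elements across all these reflection-cosets is $\bigcup a^ibZ$, of size $mn$; under Theorem \ref{AC-group} this block contributes eigenvalue $|G| - 2n = (2m-2)n$ with total multiplicity $\sum (2n - n - 1) = m(n-1) = mn - m$, matching the stated $(2(m-1)|Z(G)|)^{m|Z(G)|-m}$. The total count of distinct centralizers is thus $1 + m$, so Theorem \ref{AC-group} produces the top eigenvalue $|G| - |Z| = (2m-1)n$ with multiplicity $(1+m) - 1 = m$, matching $((2m-1)|Z(G)|)^m$. Finally the eigenvalue $|G| - |X| = 2mn - mn = mn$ appears with multiplicity $(m-1)n - 1$, matching $(m|Z(G)|)^{(m-1)|Z(G)|-1}$, and $0$ has multiplicity $1$. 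So all three nonzero eigenvalues and their multiplicities fall out, and one checks $1 + [(m-1)n-1] + [mn - m] + m = 2mn - n = |G| - |Z|$ as required.

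The main obstacle is the AC-group verification and the precise determination of centralizer orders — in particular handling the relation $(ba)^2 \in Z$ carefully rather than assuming $b^2 \in Z$ alone, and making sure reflections do not accidentally have larger centralizers (which would happen exactly when $m = 2$, or in characteristic-$2$-like degeneracies). I would isolate this in a lemma-style paragraph: let $g \in G$ with $gZ$ a reflection; then $g^2 \in Z$ by the presentation, so $\langle g, Z\rangle$ is abelian of order $2n$; if $h \in C_G(g)$ then $hZ \in C_{G/Z}(gZ) = \langle gZ\rangle$ (since in $D_{2m}$ with $m\geq 3$ a reflection is self-centralizing modulo nothing — its centralizer in $D_{2m}$ is exactly $\{1, gZ\}$), forcing $h \in \langle g, Z\rangle$. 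The analogous statement for $a$ uses $C_{D_{2m}}(aZ) = \langle aZ\rangle$. Once these two centralizer computations are pinned down, abelianness is immediate and the rest is bookkeeping identical to the earlier proofs, so I would present it tersely: "Thus $G$ is an AC-group with centralizers of non-central elements $X$ (order $m|Z(G)|$) and $Y_1, \dots, Y_m$ (each of order $2|Z(G)|$); the result follows from Theorem \ref{AC-group}."
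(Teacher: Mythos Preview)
Your proposal is correct and follows essentially the same route as the paper: identify the preimage of $\langle aZ\rangle$ (order $m|Z(G)|$) and the $m$ preimages of the reflection subgroups (each of order $2|Z(G)|$) as the distinct abelian centralizers of non-central elements, conclude that $G$ is an AC-group, and read off the spectrum from Theorem \ref{AC-group}. Your justification of the centralizer orders via $C_G(g)Z/Z \subseteq C_{G/Z}(gZ)$ and your explicit handling of the $m=2$ degeneracy are in fact more careful than the paper's proof, which simply asserts the centralizers and proceeds.
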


\begin{proof}
Since $\frac{G}{Z(G)} \cong D_{2m}$ we have $\frac{G}{Z(G)} = \langle xZ(G), yZ(G) : x^2, y^m,  xyx^{-1}y\in Z(G)\rangle$, where $x, y \in G$ with $xy \ne yx$.
It is not difficult to see that  for any $z \in Z(G)$,
\[
C_G(xy^j) = C_G(xy^jz) = Z(G)  \sqcup xy^jZ(G), 1 \leq j \leq m
\]
and
\[
 C_G(y) = C_G(y^iz) = Z(G) \sqcup yZ(G) \sqcup\cdots \sqcup  y^{m - 1}Z(G), 1 \leq i \leq m - 1
\]
are the only  centralizers of non-central elements of $G$. Also note that these centralizers are abelian subgroups of $G$. Therefore, $G$ is an AC-group.
We have  $|C_G(x^jy)| = 2n$ for $1 \leq j \leq m$ and $|C_G(y)| = mn$, where $|Z(G)| = n$. Hence, the result follows from Theorem \ref{AC-group}.
\end{proof}

\noindent Using Theorem \ref{main4}, we now compute the Laplacian spectrum of the non-commuting graphs of the groups $M_{2mn}, D_{2m}$ and $Q_{4n}$ respectively.

\begin{corollary}\label{main05}
Let $M_{2mn} = \langle a, b : a^m = b^{2n} = 1, bab^{-1} = a^{-1} \rangle$ be a metacyclic group, where $m > 2$. Then $\L-spec({\mathcal{A}}_{M_{2mn}})$
\[
= \begin{cases}
\{ 0, (mn)^{mn-n-1}, {(2mn-2n)}^{mn-m}, {(2mn-n)}^m \} &  \text{if $m$ is odd}\\

\{ 0, (mn)^{mn-2n-1}, {(2mn-4n)}^{mn-\frac {m}{2}}, {(2mn-2n)}^\frac {m}{2}\} &  \text{if $m$ is even}.\\
\end{cases}
\]
\end{corollary}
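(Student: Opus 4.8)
The plan is to identify the metacyclic group $M_{2mn}$ as a group of the form covered by Theorem~\ref{main4}, that is, to show $\frac{M_{2mn}}{Z(M_{2mn})} \cong D_{2m}$, and then read off the answer from that theorem. So the first step is to compute $Z(M_{2mn})$. Writing $M_{2mn} = \langle a, b : a^m = b^{2n} = 1,\ bab^{-1} = a^{-1}\rangle$, the relation $bab^{-1} = a^{-1}$ forces $a^i$ to be central only when $a^i = a^{-i}$, i.e. when $a^{2i} = 1$; and one checks $b^j$ commutes with $a$ only when $a = b^j a b^{-j} = a^{(-1)^j}$, which needs $j$ even or $a^2 = 1$. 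A careful case analysis of which words $a^i b^j$ are central should give $Z(M_{2mn}) = \langle b^2 \rangle$ of order $n$ when $m$ is odd (since then $a^2 = 1$ forces $a = 1$), and $Z(M_{2mn}) = \langle a^{m/2}, b^2 \rangle$ of order $2n$ when $m$ is even (the extra central involution $a^{m/2}$ appears because $a^{m/2} = a^{-m/2}$).

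The second step is to verify that the quotient is dihedral of the right size. In the odd case, $|M_{2mn}/Z| = 2mn/n = 2m$, and the images $\bar a, \bar b$ satisfy $\bar a^m = \bar b^2 = 1$ (as $b^2 \in Z$) and $\bar b \bar a \bar b^{-1} = \bar a^{-1}$, so $\frac{M_{2mn}}{Z} \cong D_{2m}$; since $m > 2$ by hypothesis, $m \geq 3$ and Theorem~\ref{main4} applies with $|Z| = n$. In the even case, $|M_{2mn}/Z| = 2mn/(2n) = m$... which is not $2m$, so here I would instead note $|M_{2mn}/Z| = m$ and that the quotient is generated by $\bar a$ of order dividing $m/2$ and $\bar b$ of order $2$ — more precisely $\frac{M_{2mn}}{Z} \cong D_{2\cdot(m/2)} = D_m$, so Theorem~\ref{main4} applies with the parameter "$m$" replaced by $m/2$ and with $|Z| = 2n$. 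Substituting $m \mapsto m/2$ and $|Z(G)| \mapsto 2n$ into the formula of Theorem~\ref{main4}: the first exponent becomes $(m/2 - 1)(2n) - 1 = mn - 2n - 1$, the eigenvalue $m|Z|$ becomes $(m/2)(2n) = mn$, the eigenvalue $2(m/2-1)(2n) = 2mn - 4n$ with exponent $(m/2)(2n) - m/2 = mn - m/2$, and the eigenvalue $(2\cdot m/2 - 1)(2n) = (m-1)(2n) = 2mn - 2n$ with exponent $m/2$ — matching the claimed even-case spectrum. In the odd case, substituting $m \mapsto m$ and $|Z(G)| \mapsto n$ directly yields $\{0, (mn)^{(m-1)n-1}, (2(m-1)n)^{mn-m}, ((2m-1)n)^m\} = \{0, (mn)^{mn-n-1}, (2mn-2n)^{mn-m}, (2mn-n)^m\}$, as claimed.

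The main obstacle I expect is the centre computation, and in particular getting the even case exactly right: one must be careful that when $m$ is even the element $a^{m/2}$ is genuinely central (it is, since $b a^{m/2} b^{-1} = a^{-m/2} = a^{m/2}$) and that together with $b^2$ it generates a subgroup of order exactly $2n$ (needs $a^{m/2} \notin \langle b^2 \rangle$, which holds because $a$ and $b$ generate a semidirect product $\mathbb{Z}_m \rtimes \mathbb{Z}_{2n}$ so powers of $a$ and powers of $b$ meet only in the identity). A secondary subtlety is the bookkeeping of the substitution $m \mapsto m/2$, $|Z| \mapsto 2n$ in the even case, and confirming the hypothesis $m/2 \geq 2$ of Theorem~\ref{main4} is met — which requires $m \geq 4$, consistent with "$m$ even" together with "$m > 2$". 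Once the centre and the quotient structure are pinned down, the rest is the routine substitution sketched above, and I would present it compactly as two displayed computations, one per parity case.
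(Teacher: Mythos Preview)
Your approach is exactly that of the paper: compute $Z(M_{2mn})$ (namely $\langle b^2\rangle$ for $m$ odd and $\langle a^{m/2},b^2\rangle$ for $m$ even), identify the quotient as $D_{2m}$ or $D_m$ respectively, and then invoke Theorem~\ref{main4} with the corresponding parameters. The paper merely asserts the centre and quotient structure without justification, so your added verifications (including the check that $m/2\ge 2$ when $m$ is even) only make the argument more complete.
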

\begin{proof}
Observe that $Z(M_{2mn}) = \langle b^2 \rangle$ or $\langle b^2 \rangle \cup a^{\frac{m}{2}}\langle b^2 \rangle$ according as $m$ is odd or even.  Also, it is easy to see that $\frac{M_{2mn}}{Z(M_{2mn})} \cong D_{2m}$ or $D_m$ according as $m$ is odd or even. Hence, the result follows from Theorem \ref{main4}.
\end{proof}
\noindent As a corollary to the above result we have the following result.
\begin{corollary}\label{main005}
Let $D_{2m} = \langle a, b : a^m = b^{2} = 1, bab^{-1} = a^{-1} \rangle$ be  the dihedral group of order $2m$, where $m > 2$. Then
\[
\L-spec({\mathcal{A}}_{D_{2m}}) = \begin{cases}
\{ 0, {m}^{m-2}, {(2m - 1)}^m \} & \text{if $m$ is odd}\\
\{ 0, {m}^{m-3}, {(2m - 4)}^\frac {m}{2}, (2m - 2)^\frac {m}{2} \} & \text{if $m$ is even}.\\
\end{cases}
\]
\end{corollary}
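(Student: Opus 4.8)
The plan is to specialize Corollary \ref{main05} to the case $n = 1$, since $D_{2m} = \langle a, b : a^m = b^2 = 1, bab^{-1} = a^{-1}\rangle$ is precisely $M_{2m\cdot 1}$ in the notation of Corollary \ref{main05}. First I would note that the hypothesis $m > 2$ carries over verbatim, so Corollary \ref{main05} applies directly with $n = 1$.

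Then I would substitute $n = 1$ into the two cases of the formula for $\L-spec({\mathcal{A}}_{M_{2mn}})$. In the odd case this gives $\{0, m^{m-1-1}, (2m-2)^{m-m}, (2m-1)^m\} = \{0, m^{m-2}, (2m-1)^m\}$, where the eigenvalue $2m-2$ disappears because its multiplicity $m - m = 0$. In the even case this gives $\{0, m^{m-2-1}, (2m-4)^{m - m/2}, (2m-2)^{m/2}\} = \{0, m^{m-3}, (2m-4)^{m/2}, (2m-2)^{m/2}\}$, matching the claimed formula exactly.

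There is essentially no obstacle here; the only point requiring a moment of care is the vanishing of the middle eigenvalue in the odd case, which one should remark on explicitly so the reader sees why the odd-case spectrum has one fewer distinct eigenvalue than the general formula. One might also briefly confirm that the orderings $0 < m < 2m - 1$ (odd case) and $0 < m < 2m-4 < 2m-2$ (even case, valid since $m > 2$ forces $2m - 4 \geq m$ when $m \geq 4$, and $m$ even with $m > 2$ indeed means $m \geq 4$) are consistent with the convention $\alpha_1 < \alpha_2 < \cdots$, though this is routine.

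\begin{proof}
Taking $n = 1$ in Corollary \ref{main05}, we have $M_{2m} = \langle a, b : a^m = b^2 = 1, bab^{-1} = a^{-1}\rangle = D_{2m}$. If $m$ is odd, then Corollary \ref{main05} gives
\[
\L-spec({\mathcal{A}}_{D_{2m}}) = \{0, m^{m - 1 - 1}, (2m - 2)^{m - m}, (2m - 1)^m\} = \{0, m^{m - 2}, (2m - 1)^m\},
\]
since the eigenvalue $2m - 2$ occurs with multiplicity $m - m = 0$. If $m$ is even, then $m \geq 4$ and Corollary \ref{main05} gives
\[
\L-spec({\mathcal{A}}_{D_{2m}}) = \{0, m^{m - 2 - 1}, (2m - 4)^{m - \frac{m}{2}}, (2m - 2)^{\frac{m}{2}}\} = \{0, m^{m - 3}, (2m - 4)^{\frac{m}{2}}, (2m - 2)^{\frac{m}{2}}\}.
\]
This completes the proof.
\end{proof}
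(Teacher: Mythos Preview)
Your proof is correct and follows exactly the approach the paper intends: the statement is presented there as an immediate corollary of Corollary~\ref{main05}, obtained by setting $n=1$. Your handling of the vanishing multiplicity $m-m=0$ in the odd case is precisely the small detail the reader is left to observe.
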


\begin{corollary}\label{q4m}
Let $Q_{4n} = \langle x, y : y^{2n} = 1, x^2 = y^n,xyx^{-1} = y^{-1}\rangle$, where $n \geq 2$, be the   generalized quaternion group of order $4n$. Then
\[
\L-spec({\mathcal{A}_{Q_{4n}}}) = \{ 0, {(2n)}^{2n - 3}, {(4n - 4)}^n, {(4n - 2)}^n \}.
\]
\end{corollary}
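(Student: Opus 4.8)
The plan is to identify $Q_{4n}$ as a group whose central factor is a dihedral group, and then simply invoke Theorem \ref{main4}. First I would record the standard facts about the generalized quaternion group $Q_{4n} = \langle x, y : y^{2n} = 1, x^2 = y^n, xyx^{-1} = y^{-1}\rangle$: its order is $4n$, its centre is $Z(Q_{4n}) = \langle y^n \rangle = \{1, y^n\}$, which has order $2$, and the quotient $\frac{Q_{4n}}{Z(Q_{4n})}$ is isomorphic to the dihedral group $D_{2n}$ of order $2n$. The isomorphism is seen by checking that the images $\bar{x}, \bar{y}$ of $x, y$ satisfy $\bar{y}^n = 1$ (since $y^n \in Z(Q_{4n})$), $\bar{x}^2 = 1$ (since $x^2 = y^n \in Z(Q_{4n})$), and $\bar{x}\bar{y}\bar{x}^{-1} = \bar{y}^{-1}$, and these are exactly the defining relations of $D_{2n}$; a quick order count confirms the quotient has order $2n$ so the surjection is an isomorphism. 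In particular $\bar{x} \neq \bar{y}$ since $xy \neq yx$ in $Q_{4n}$, so the hypothesis of Theorem \ref{main4} (non-abelian central factor) is met.

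Next I would apply Theorem \ref{main4} with the dihedral parameter equal to $n$ (i.e. the ``$m$'' of that theorem is our $n$) and $|Z(G)| = 2$. Substituting $m \mapsto n$ and $|Z(G)| \mapsto 2$ into
\[
\L-spec({\mathcal{A}_G}) = \{ 0 , {(m|Z(G)|)}^{(m-1)|Z(G)|-1} , {(2(m-1)|Z(G)|)}^{m|Z(G)|-m}, {((2m-1)|Z(G)|)}^m \}
\]
gives eigenvalue $n \cdot 2 = 2n$ with multiplicity $(n-1)\cdot 2 - 1 = 2n - 3$, eigenvalue $2(n-1)\cdot 2 = 4n - 4$ with multiplicity $n \cdot 2 - n = n$, and eigenvalue $(2n-1)\cdot 2 = 4n - 2$ with multiplicity $n$, together with the single eigenvalue $0$. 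This is exactly the claimed spectrum $\{0, (2n)^{2n-3}, (4n-4)^n, (4n-2)^n\}$, so the result follows.

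The only real content is the structural claim that $\frac{Q_{4n}}{Z(Q_{4n})} \cong D_{2n}$ with $|Z(Q_{4n})| = 2$; everything else is routine arithmetic substitution. This is standard and there is no genuine obstacle, though one should be mildly careful about the degenerate small case: when $n = 2$, $Q_8$ has $\frac{Q_8}{Z(Q_8)} \cong \mathbb{Z}_2 \times \mathbb{Z}_2 \cong D_4$, which is the boundary case $m = 2$ of Theorem \ref{main4}, so the theorem still applies and one can check the resulting spectrum $\{0, 4^1, 4^2, 6^2\} = \{0, 4^3, 6^2\}$ is consistent with the formula (the two ``middle'' eigenvalues $2n$ and $4n-4$ coincide at $n=2$, merging into a single eigenvalue of combined multiplicity). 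Hence I would note that Theorem \ref{main4} is valid for all $m \geq 2$ and the corollary holds for all $n \geq 2$ as stated.
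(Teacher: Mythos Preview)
Your proposal is correct and follows exactly the same approach as the paper: the paper's proof simply notes that $Z(Q_{4n}) = \{1, y^n\}$ and $\frac{Q_{4n}}{Z(Q_{4n})} \cong D_{2n}$, then invokes Theorem~\ref{main4}. You supply more detail (the explicit verification of the isomorphism, the arithmetic substitution, and the $n=2$ boundary check), but the underlying argument is identical.
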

\begin{proof}
The result follows from Theorem \ref{main4} noting that  $Z(Q_{4n}) = \{1, a^n\}$ and  $\frac{Q_{4n}}{Z(Q_{4n})}$ $\cong D_{2n}$.
\end{proof}

\section{Some well-known groups}

 In this section,  we compute the Laplacian spectrum of the  non-commuting graphs of some well-known families of finite groups. We begin with the  family of finite groups having order $pq$ where $p$ and $q$ are primes.
\begin{proposition}\label{order-pq}
Let $G$ be a non-abelian group of order $pq$, where $p$ and $q$ are primes with $p\mid (q - 1)$. Then
\[
\L-spec({\mathcal{A}}_G) = \{0, {(pq - q)}^{q - 2}, {(pq - p)}^{pq - 2q}, {(pq - 1)^q}\}.
\]
\end{proposition}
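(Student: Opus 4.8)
The plan is to apply Theorem \ref{AC-group}, so almost all of the work consists of pinning down the centralizers of the non-central elements of $G$ and checking that $G$ is an AC-group. First I would observe that $Z(G)$ is trivial: if $Z(G)$ were nontrivial then $|G/Z(G)|\in\{1,p,q\}$, so $G/Z(G)$ would be cyclic, forcing $G$ abelian, a contradiction. Hence $|Z(G)|=1$. Also note $p\mid(q-1)$ forces $p\le q-1$, so $p<q$ and in particular $p\ne q$.

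Next, take any $x\in G\setminus Z(G)$. Then $x\ne 1$ and $\langle x\rangle\ne G$ (as $G$ is non-abelian, hence non-cyclic), so $x$ has order $p$ or $q$, and $C_G(x)$ is a proper subgroup of $G$ containing $\langle x\rangle$; since $|C_G(x)|$ divides $pq$ and is neither $1$ nor $pq$, it has prime order $p$ or $q$. In particular every such $C_G(x)$ is cyclic, hence abelian, so $G$ is a non-abelian AC-group, and each centralizer of a non-central element is a Sylow subgroup of $G$.

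Then I would count the distinct centralizers. Since $n_q\mid p$ and $n_q\equiv 1\pmod q$ with $p<q$, the Sylow $q$-subgroup $Q$ is normal and is the unique subgroup of order $q$; every non-identity element of $Q$ has centralizer $Q$. On the other hand $n_p\mid q$ and $n_p\equiv 1\pmod p$ with $n_p\ne 1$ (otherwise $G\cong{\mathbb{Z}}_p\times{\mathbb{Z}}_q$ is abelian), so $n_p=q$; write $P_1,\dots,P_q$ for the Sylow $p$-subgroups, each of order $p$ and each equal to the centralizer of its non-identity elements. Distinct $P_i$ yield distinct centralizers: if $C_G(y)=C_G(y')$ with $y\in P_i\setminus\{1\}$, $y'\in P_j\setminus\{1\}$ and $i\ne j$, this common centralizer would contain both $P_i$ and $P_j$, hence have order $pq$, making $y$ central, which is impossible. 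Therefore the distinct centralizers of non-central elements are exactly $P_1,\dots,P_q,Q$, so $n=q+1$ with $|X_1|=\cdots=|X_q|=p$ and $|X_{q+1}|=q$ (using $p<q$ for the ordering).

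Finally I would substitute $|G|=pq$, $|Z(G)|=1$, $n=q+1$, and these centralizer sizes into the formula of Theorem \ref{AC-group}. The centralizer $X_{q+1}=Q$ of order $q$ contributes the eigenvalue $pq-q$ with multiplicity $q-2$; each of the $q$ centralizers of order $p$ contributes $pq-p$ with multiplicity $p-2$, which combine to multiplicity $q(p-2)=pq-2q$; the term $(|G|-|Z(G)|)^{n-1}$ is $(pq-1)^q$; and together with $0$ this is exactly the claimed spectrum. The only point requiring a little care is the degenerate case $p=2$, where the eigenvalue $pq-p$ has multiplicity $0$ and simply drops out (consistently, $pq-2q=0$), recovering Corollary \ref{main005} for $D_{2q}$. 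I do not anticipate any real obstacle beyond correctly identifying the centralizer structure; the remainder is a direct computation.
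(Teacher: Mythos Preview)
Your proposal is correct and follows essentially the same approach as the paper's proof: both argue that $Z(G)$ is trivial, that $G$ is an AC-group whose centralizers of non-central elements are precisely the Sylow subgroups (one of order $q$ and $q$ of order $p$), and then invoke Theorem~\ref{AC-group}. The paper's proof simply asserts these facts without justification, whereas you supply the details; the only superfluous step is your separate argument that distinct $P_i$ give distinct centralizers, since you had already shown $C_G(y)=P_i$ for $y\in P_i\setminus\{1\}$.
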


\begin{proof}
It is easy to see that $|Z(G)| = 1$ and $G$ is an AC-group. Also the centralizers of non-central elements of $G$ are precisely the Sylow subgroups of $G$. The number of Sylow $q$-subgroups and Sylow $p$-subgroups of $G$ are one and $q$ respectively.
Hence,  the result follows from  Theorem \ref{AC-group}.
\end{proof}

\begin{proposition}\label{semid}
The Laplacian spectrum of the non-commuting graph of the quasidihedral group $QD_{2^n} = \langle a, b : a^{2^{n-1}} =  b^2 = 1, bab^{-1} = a^{2^{n - 2} - 1}\rangle$, where $n \geq 4$, is given by
\[
\L-spec({\mathcal{A}}_{QD_{2^n}}) = \{0, {(2^{n-1})}^{2^{n-1}-3}, {(2^n-4)}^{2^{n-2}}, {(2^n-2)}^{2^{n-2}}\}.
\]
\end{proposition}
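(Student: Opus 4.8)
The plan is to deduce this from Theorem~\ref{main4}, exactly as Corollary~\ref{q4m} (for the generalized quaternion group) was deduced from it. Thus the whole argument reduces to two facts about the group $QD_{2^n}$: that $|Z(QD_{2^n})| = 2$, and that $QD_{2^n}/Z(QD_{2^n}) \cong D_{2^{n-1}}$.

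First I would compute the centre. Since $QD_{2^n} = \langle a, b\rangle$, a power $a^k$ is central iff it commutes with $b$, i.e.\ iff $ba^kb^{-1} = a^{k(2^{n-2}-1)}$ equals $a^k$, i.e.\ iff $2^{n-1} \mid k(2^{n-2}-2) = 2k(2^{n-3}-1)$; as $2^{n-3}-1$ is odd this forces $2^{n-2}\mid k$, so the only central powers of $a$ are $1$ and $a^{2^{n-2}}$. A short computation shows no element $a^k b$ commutes with $a$ when $n \geq 4$, so none is central. Hence $Z(QD_{2^n}) = \{1, a^{2^{n-2}}\}$ has order $2$.

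Next I would identify $\overline{G} := QD_{2^n}/Z(QD_{2^n})$. It has order $2^{n-1}$ and is generated by the images $\overline a, \overline b$, which satisfy $\overline a^{\,2^{n-2}} = \overline b^{\,2} = 1$ and $\overline b\,\overline a\,\overline b^{-1} = \overline a^{\,2^{n-2}-1} = \overline a^{-1}$, the last equality holding because $a^{2^{n-2}}$ is trivial in $\overline G$. Hence $\overline G \cong D_{2^{n-1}}$. Applying Theorem~\ref{main4} with $m = 2^{n-2}$ (legitimate since $m \geq 2$ as $n \geq 4$) and $|Z(G)| = 2$, and then simplifying the eigenvalues and multiplicities --- namely $m|Z(G)| = 2^{n-1}$, $(m-1)|Z(G)|-1 = 2^{n-1}-3$, $2(m-1)|Z(G)| = 2^{n}-4$, $m|Z(G)|-m = 2^{n-2}$, and $(2m-1)|Z(G)| = 2^{n}-2$ --- gives exactly the asserted $\L-spec({\mathcal{A}}_{QD_{2^n}})$.

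The only genuine work is the bookkeeping in the first two steps, i.e.\ pinning down the centre and checking that the quotient relation really is $\overline b\,\overline a\,\overline b^{-1} = \overline a^{-1}$ (so that $\overline G$ is dihedral rather than some other group of order $2^{n-1}$); after that the proof is pure substitution. Alternatively one could bypass Theorem~\ref{main4} and invoke Theorem~\ref{AC-group} directly, after verifying that the centralizers of non-central elements of $QD_{2^n}$ are precisely $\langle a\rangle$ (of order $2^{n-1}$) together with the $2^{n-2}$ subgroups $\{1, a^{2^{n-2}}, a^{j}b, a^{j+2^{n-2}}b\}$ (of order $4$), that they pairwise meet in $Z(QD_{2^n})$, and hence that $QD_{2^n}$ is an AC-group; but routing through Theorem~\ref{main4} is the shorter path.
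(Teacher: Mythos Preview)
Your proof is correct, but it takes a different route from the paper. The paper does \emph{not} pass through Theorem~\ref{main4}; instead it proceeds exactly along what you describe as the ``alternative'' route: it writes down the centre $Z(QD_{2^n}) = \{1, a^{2^{n-2}}\}$, explicitly lists the centralizers of non-central elements as $\langle a\rangle$ (of order $2^{n-1}$) and the $2^{n-2}$ four-element subgroups $\{1, a^{2^{n-2}}, a^{j}b, a^{j+2^{n-2}}b\}$, observes these are abelian so that $QD_{2^n}$ is an AC-group, and then invokes Theorem~\ref{AC-group} directly. Your approach, by contrast, packages all of the centralizer information into the single observation $QD_{2^n}/Z(QD_{2^n}) \cong D_{2^{n-1}}$ and then applies Theorem~\ref{main4}; this is exactly how the paper handles $Q_{4n}$ in Corollary~\ref{q4m}, and is arguably cleaner and shorter here as well. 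The paper's approach has the minor advantage of making the centralizer structure visible without reference to an earlier theorem, but your route is the more efficient one given the tools already in hand.
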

\begin{proof}
It is well-known that $Z(QD_{2^n}) = \{1, a^{2^{n - 2}}\}$. Also
\[
C_{QD_{2^n}}(a) = C_{QD_{2^n}}(a^i) = \langle a \rangle \text{ for } 1 \leq i \leq 2^{n - 1} - 1, i \ne 2^{n - 2}
\]
and
\[
C_{QD_{2^n}}(a^jb) = \{1, a^{2^{n - 2}}, a^jb, a^{j + 2^{n - 2}}b \} \text{ for } 1 \leq j \leq 2^{n - 2}
\]
are the only  centralizers of non-central elements of $QD_{2^n}$. Note that these centralizers are abelian subgroups of  $QD_{2^n}$. Therefore, $QD_{2^n}$ is an AC-group.
We have $|C_{QD_{2^n}}(a)| = 2^{n - 1}$ and $|C_{QD_{2^n}}(a^jb)| = 4$ for $1 \leq j \leq 2^{n - 2}$.  Hence,  the result follows from  Theorem \ref{AC-group}.
\end{proof}

\begin{proposition}\label{psl}
The Laplacian spectrum of the non-commuting graph of the projective special linear group  $PSL(2, 2^k)$, where $k \geq 2$,   is given by
\begin{align*}
\L-spec({\mathcal{A}}_{PSL(2, 2^k)}) = & \{0, {(2^{3k}-2^{k+1}-1)}^{2^{3k-1}-2^{2k}+2^{k-1}}, {(2^{3k}-2^{k+1})}^{2^{2k}-2^k-2},\\
 & {(2^{3k}-2^{k+1}+1)}^{2^{3k-1}-2^{3k}-3.2^{k-1}}, {(2^{3k}-2^k-1)}^{2^{2k}+2^k}\}.
\end{align*}
\end{proposition}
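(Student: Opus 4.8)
The plan is to realise $G:=PSL(2,2^k)$ as a finite non-abelian AC-group and then apply Theorem~\ref{AC-group}. First I would collect the structural facts. Since $\mathbb{F}_{2^k}$ has characteristic $2$ one has $SL(2,2^k)=PSL(2,2^k)$, which is simple for $k\geq 2$, so $Z(G)$ is trivial and $|G|=2^k(2^{2k}-1)=2^{3k}-2^k$. Next I would describe the centralizers of the non-central (equivalently, non-identity) elements. Every non-identity element of $SL(2,2^k)$ is unipotent of order $2$ or regular semisimple, and the associated centralizer is, respectively: a Sylow $2$-subgroup, which is elementary abelian of order $2^k$ and of which there are $2^k+1$ (the index of a Borel subgroup of order $2^k(2^k-1)$); a split maximal torus, cyclic of order $2^k-1$, of which there are $\frac{2^k(2^k+1)}{2}=2^{2k-1}+2^{k-1}$ (the index of its dihedral normalizer of order $2(2^k-1)$); or a non-split maximal torus, cyclic of order $2^k+1$, of which there are $\frac{2^k(2^k-1)}{2}=2^{2k-1}-2^{k-1}$. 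All of these are abelian, so $G$ is indeed an AC-group, and the distinct centralizer orders are $2^k-1<2^k<2^k+1$.

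With this data in hand, I would feed it into Theorem~\ref{AC-group}. A small point needs care: that theorem is indexed by the individual distinct centralizers, whereas here many of them have a common order, so it is cleanest to use the intermediate step in its proof directly -- namely that $\overline{{\mathcal A}_G}$ is a disjoint union of complete graphs, which after grouping by size reads $l_1K_{m_1}\sqcup l_2K_{m_2}\sqcup l_3K_{m_3}$ with $(m_1,m_2,m_3)=(2^k-2,\,2^k-1,\,2^k)$ (the centralizer orders diminished by $|Z(G)|=1$) and $(l_1,l_2,l_3)=(2^{2k-1}+2^{k-1},\,2^k+1,\,2^{2k-1}-2^{k-1})$ -- and then to invoke Corollary~\ref{L-spec-3}. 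As a consistency check, $\sum_i l_im_i=|G|-1=2^{3k}-2^k-1$ and $\sum_i l_i=2^{2k}+2^k+1$, in agreement with $|V({\mathcal A}_G)|=|G|-|Z(G)|$.

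Finally I would simplify. By Corollary~\ref{L-spec-3}, $L({\mathcal A}_G)$ has the eigenvalue $0$ once, the eigenvalue $\sum_i l_im_i=2^{3k}-2^k-1$ with multiplicity $\sum_i l_i-1=2^{2k}+2^k$, and, for $i=1,2,3$, the eigenvalue $\bigl(\sum_j l_jm_j\bigr)-m_i$ with multiplicity $l_i(m_i-1)$; the latter three values are $2^{3k}-2^{k+1}-1$, $2^{3k}-2^{k+1}$ and $2^{3k}-2^{k+1}+1$ (arising from $m_3$, $m_2$, $m_1$ respectively, and indeed $0<2^{3k}-2^{k+1}-1<2^{3k}-2^{k+1}<2^{3k}-2^{k+1}+1<2^{3k}-2^k-1$ whenever $k\geq 2$). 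Expanding the products $l_i(m_i-1)$ then yields the corresponding multiplicities $2^{3k-1}-2^{2k}+2^{k-1}$, $2^{2k}-2^k-2$ and $2^{3k-1}-2^{2k}-3\cdot 2^{k-1}$, which establishes the stated Laplacian spectrum. The main obstacle is the first step: correctly enumerating the centralizers of non-central elements of $PSL(2,2^k)$ together with their multiplicities, which amounts to extracting the maximal abelian subgroups from Dickson's classification of the subgroups of $PSL(2,q)$. Everything after that input is routine bookkeeping with Corollary~\ref{L-spec-3}.
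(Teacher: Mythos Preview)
Your proposal is correct and follows essentially the same route as the paper: identify $PSL(2,2^k)$ as an AC-group with trivial centre, list the three conjugacy classes of proper centralizers (Sylow $2$-subgroups, split tori, non-split tori) together with their sizes and counts, deduce that $\overline{\mathcal{A}_G}=(2^k+1)K_{2^k-1}\sqcup 2^{k-1}(2^k+1)K_{2^k-2}\sqcup 2^{k-1}(2^k-1)K_{2^k}$, and then apply Corollary~\ref{L-spec-3}; the only difference is that the paper cites \cite[Proposition~3.21]{Ab06} for the centralizer data whereas you extract it directly from the subgroup structure of $SL(2,q)$. Your explicit arithmetic at the end also silently corrects a typo in the displayed statement, where the multiplicity written as $2^{3k-1}-2^{3k}-3\cdot 2^{k-1}$ should read $2^{3k-1}-2^{2k}-3\cdot 2^{k-1}$.
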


\begin{proof}
We know that  $PSL(2, 2^k)$ is a non-abelian group of order $2^k(2^{2k} - 1)$ with trivial center. By Proposition 3.21 of \cite{Ab06}, the set of centralizers of non-trivial elements of $PSL(2, 2^k)$ is given by
\[
\{xPx^{-1}, xAx^{-1}, xBx^{-1} : x \in PSL(2, 2^k)\}
\]
where $P$ is an elementary abelian \quad $2$-subgroup and  $A, \quad B$ are  cyclic subgroups of  $PSL(2, 2^k)$ having order $2^k, 2^k - 1$ and $2^k + 1$ respectively. Also the number of conjugates of $P, A$ and $B$ in $PSL(2, 2^k)$ are  $2^k + 1, 2^{k - 1}(2^k + 1)$ and $2^{k - 1}(2^k - 1)$ respectively. Note that $PSL(2, 2^k)$ is a AC-group and so, by \eqref{ducomgraph}, we have
\[
\overline{{\mathcal{A}}_{PSL(2, 2^k)}} = (2^k + 1)K_{|xPx^{-1}| - 1} \sqcup 2^{k - 1}(2^k + 1)K_{|xAx^{-1}| - 1} \sqcup 2^{k - 1}(2^k - 1)K_{|xBx^{-1}| - 1}.
\]
That is, $\overline{{\mathcal{A}}_{PSL(2, 2^k)}} = (2^k + 1)K_{2^k - 1} \sqcup 2^{k - 1}(2^k + 1)K_{2^k - 2} \sqcup 2^{k - 1}(2^k - 1)K_{2^k}$. Hence, the result follows from   Corollary \ref{L-spec-3}.
\end{proof}

\begin{proposition}
The Laplacian spectrum of the non-commuting graph of the general linear group  $GL(2, q)$, where $q = p^n > 2$ and $p$ is a prime integer,   is given by
\begin{align*}
\L-spec({\mathcal{A}}_{GL(2, q)}) = & \{0, {(q^4-q^3-2q^2+q+1)}^{\frac {q^4-2q^3+q}{2}}, {(q^4-q^3-2q^2+2q)}^{q^3-q^2-2q}, \\
& {(q^4-q^3-2q^2+3q-1)}^{\frac {q^4-2q^3-2q^2+q}{2}}, {(q^4-q^3-q^2+1)}^{q^2+q}\}.
\end{align*}
\end{proposition}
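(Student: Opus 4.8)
The plan is to proceed exactly as in the previous propositions: show that $GL(2,q)$ is an AC-group, identify the distinct centralizers of non-central elements together with the number of conjugates of each, and then invoke Theorem \ref{AC-group} (or equivalently Corollary \ref{L-spec-3} via \eqref{ducomgraph}).

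First I would recall the standard structure theory of $GL(2,q)$. The group has order $(q^2-1)(q^2-q) = q(q-1)^2(q+1)$ and centre $Z(GL(2,q))$ consisting of the scalar matrices, so $|Z(GL(2,q))| = q-1$. The non-central elements fall into three types according to the rational canonical form: (i) elements with a repeated eigenvalue in $\mathbb{F}_q$ but not diagonalizable (the "unipotent-times-scalar" type), whose centralizer has order $q(q-1)$; (ii) elements with two distinct eigenvalues in $\mathbb{F}_q$ (the split semisimple type), whose centralizer is a maximal split torus of order $(q-1)^2$; and (iii) elements whose eigenvalues lie in $\mathbb{F}_{q^2}\setminus\mathbb{F}_q$ (the non-split semisimple type), whose centralizer is a non-split torus of order $q^2-1$. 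Each such centralizer is abelian, so $GL(2,q)$ is an AC-group. Next I would count conjugates: the normalizer of a type-(i) centralizer has order $q(q-1)^2$ giving $q+1$ conjugates; the normalizer of a split torus has order $2(q-1)^2$ giving $\frac{q(q+1)}{2}$ conjugates; and the normalizer of a non-split torus has order $2(q^2-1)$ giving $\frac{q(q-1)}{2}$ conjugates. (These are the Sylow/torus counts found, e.g., in Abdollahi--Akbari--Maimani \cite{Ab06}; I would cite the relevant proposition there, analogous to the citation used in Proposition \ref{psl}.)

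Then, writing $X_1, X_2, X_3$ for representatives with $|X_1| = q(q-1) < |X_2| = (q-1)^2 < |X_3| = q^2-1$ (valid since $q>2$), equation \eqref{ducomgraph} gives
\[
\overline{{\mathcal{A}}_{GL(2,q)}} = (q+1)K_{q(q-1)-(q-1)} \sqcup \tfrac{q(q+1)}{2}K_{(q-1)^2-(q-1)} \sqcup \tfrac{q(q-1)}{2}K_{(q^2-1)-(q-1)},
\]
that is, $\overline{{\mathcal{A}}_{GL(2,q)}} = (q+1)K_{(q-1)^2} \sqcup \frac{q(q+1)}{2}K_{(q-1)(q-2)} \sqcup \frac{q(q-1)}{2}K_{q(q-1)}$. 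Applying Corollary \ref{L-spec-3} with these parameters, using $\sum l_i m_i = |GL(2,q)| - |Z| = q^4-q^3-q^2+q - (q-1) = q^4-q^3-q^2+1$ and subtracting each $m_i = q(q-1),\ (q-1)^2,\ (q-1)(q-2)$ in turn, yields the four claimed eigenvalues $q^4-q^3-2q^2+3q-1$, $q^4-q^3-2q^2+2q$, $q^4-q^3-2q^2+q+1$ and $q^4-q^3-q^2+1$ with the stated multiplicities $l_i(m_i-1)$ and $\sum l_i - 1$; the arithmetic of the multiplicities $(q+1)((q-1)^2-1) = q^3-q^2-2q$, $\frac{q(q+1)}{2}((q-1)(q-2)-1) = \frac{q^4-2q^3-2q^2+q}{2}$, and $\frac{q(q-1)}{2}(q(q-1)-1) = \frac{q^4-2q^3+q}{2}$ matches the statement.

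The main obstacle is not conceptual but bookkeeping: one must be careful that the three centralizer orders are genuinely distinct for all $q = p^n > 2$ (so that Corollary \ref{L-spec-3} applies in the stated form with $m_1 < m_2 < m_3$ — note $q(q-1) > (q-1)^2$ iff $q > 2$, and $(q-1)^2 > (q-1)(q-2)$ always), and that the conjugacy-class counts of the three torus/centralizer types are quoted correctly from the literature; an off-by-one in a normalizer index would corrupt every multiplicity. I would therefore present the centralizer list explicitly with representatives (a Jordan block $\begin{pmatrix} \lambda & 1 \\ 0 & \lambda \end{pmatrix}$, a diagonal matrix with distinct entries, and a companion matrix of an irreducible quadratic), verify abelianness and the pairwise intersection $X_i \cap X_j = Z$ as in the proof of Theorem \ref{AC-group}, and then let Corollary \ref{L-spec-3} do the rest.
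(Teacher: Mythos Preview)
Your proposal is correct and follows essentially the same route as the paper: identify the three centralizer types of orders $(q-1)^2$, $q^2-1$, and $q(q-1)$ with $\frac{q(q+1)}{2}$, $\frac{q(q-1)}{2}$, and $q+1$ conjugates respectively (the paper cites Proposition~3.26 and Lemma~3.5 of \cite{Ab06} for exactly this), write the complement as a disjoint union of complete graphs via \eqref{ducomgraph}, and apply Corollary~\ref{L-spec-3}. One cosmetic slip: you write $|X_1| = q(q-1) < |X_2| = (q-1)^2$, but in fact $q(q-1) > (q-1)^2$ for all $q>1$ (as you yourself note later); the correct ordering of centralizer sizes is $(q-1)^2 < q(q-1) < q^2-1$, though this does not affect the spectrum you compute.
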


\begin{proof}
We have $|GL(2, q)| = (q^2 -1)(q^2 - q)$ and $|Z(GL(2, q))| = q - 1$. By Proposition 3.26 of  \cite{Ab06}, the set of centralizers of non-central elements of $GL(2, q)$ is given by
\[
\{xDx^{-1}, xIx^{-1}, xPZ(GL(2, q))x^{-1} : x \in GL(2, q)\}
\]
where $D$ is the subgroup of $GL(2, q)$ consisting  of all diagonal matrices, $I$ is a  cyclic subgroup of $GL(2, q)$ having order $q^2 - 1$  and $P$ is the Sylow $p$-subgroup of $GL(2, q)$ consisting of all upper triangular matrices with $1$ in the diagonal. The orders of  $D$ and $PZ(GL(2, q))$ are  $(q - 1)^2$ and $q(q - 1)$ respectively. Also   the number of conjugates of $D, I$ and $PZ(GL(2, q))$ in $GL(2, q)$  are  $\frac{q(q + 1)}{2}, \frac{q(q - 1)}{2}$ and $q + 1$ respectively. Since $GL(2, q)$ is an AC-group (see Lemma 3.5 of \cite{Ab06}), by \eqref{ducomgraph}, we have $\overline{{\mathcal{A}}_{GL(2, q)}} =$
\[
 \frac{q(q + 1)}{2}K_{|xDx^{-1}| - q + 1} \sqcup \frac{q(q - 1)}{2}K_{|xIx^{-1}| - q + 1} \sqcup (q + 1)K_{|xPZ(GL(2, q))x^{-1}| - q + 1}.
\]
That is, $\overline{{\mathcal{A}}_{GL(2, q)}} = \frac{q(q + 1)}{2}K_{q^2 - 3q + 2} \sqcup \frac{q(q - 1)}{2}K_{q^2 - q} \sqcup (q + 1)K_{q^2 - 2q + 1}$. Hence, the result follows from   Corollary \ref{L-spec-3}.
\end{proof}

\begin{proposition}\label{Hanaki1}
Let $F = GF(2^n), n \geq 2$ and $\vartheta$ be the Frobenius  automorphism of $F$, that is, $\vartheta(x) = x^2$ for all $x \in F$. Then the Laplacian spectrum of the non-commuting graph of the group
\[
A(n, \vartheta) = \left\lbrace U(a, b) = \begin{bmatrix}
        1 & 0 & 0\\
        a & 1 & 0\\
        b & \vartheta(a) & 1
       \end{bmatrix} : a, b \in F \right\rbrace
\]
under matrix multiplication given by $U(a, b)U(a', b') = U(a + a', b + b' + a'\vartheta(a))$ is
\[
\L-spec({\mathcal{A}}_{A(n, \vartheta)}) = \{ 0, {(2^{2n}-2^{n+1})}^{{(2^n-1)}^2}, {(2^{2n}-2^n)}^{2^n-2} \}.
\]
\end{proposition}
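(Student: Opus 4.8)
The plan is to verify that $A(n,\vartheta)$ is a finite non-abelian AC-group, to determine its centre and the list of distinct centralizers of non-central elements, and then to read the spectrum off from Theorem \ref{AC-group}.

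First I would record that $A(n,\vartheta)$ has order $|F|^2 = 2^{2n}$, and work out the commuting relation. For $U(a,b), U(a',b') \in A(n,\vartheta)$ the given multiplication rule gives
\[
U(a,b)U(a',b') = U\bigl(a+a',\, b+b'+a'\vartheta(a)\bigr) \qquad\text{and}\qquad U(a',b')U(a,b) = U\bigl(a+a',\, b+b'+a\vartheta(a')\bigr),
\]
so these two elements commute precisely when $a'\vartheta(a) = a\vartheta(a')$. Since $\vartheta(x) = x^2$ and $F$ has characteristic $2$, this is equivalent to $a a'(a+a') = 0$, i.e.\ to $a = 0$, or $a' = 0$, or $a = a'$.

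Consequently $U(a,b)$ is central if and only if $a a'(a+a') = 0$ for every $a' \in F$, which forces $a = 0$; hence $Z(A(n,\vartheta)) = \{U(0,b) : b \in F\}$ is a subgroup of order $2^n$, and $A(n,\vartheta)$ is non-abelian (for $n \geq 2$ take $a \in F \setminus \{0,1\}$, which exists as $|F| \geq 4$, and observe that $U(1,0)$ does not commute with $U(a,0)$). For a non-central element $U(a,b)$, that is $a \neq 0$, the commuting relation shows
\[
C_{A(n,\vartheta)}\bigl(U(a,b)\bigr) = \{U(0,b') : b' \in F\} \cup \{U(a,b') : b' \in F\},
\]
a subset of order $2^{n+1}$ that depends only on $a$; any two of its elements commute (each has first coordinate in $\{0,a\}$), so it is an abelian subgroup and $A(n,\vartheta)$ is an AC-group. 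Letting $a$ range over $F \setminus \{0\}$ gives exactly $2^n - 1$ distinct centralizers of non-central elements, all of order $2^{n+1}$.

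Finally I would apply Theorem \ref{AC-group} with $|G| = 2^{2n}$, $|Z(G)| = 2^n$, and $2^n - 1$ distinct centralizers $X_1, \dots, X_{2^n - 1}$ of non-central elements, each of size $2^{n+1}$. Because the $X_i$ all have the same order, the theorem gives the eigenvalue $0$ with multiplicity $1$, the eigenvalue $|G| - |X_i| = 2^{2n} - 2^{n+1}$ with combined multiplicity $(2^n - 1)(|X_i| - |Z(G)| - 1) = (2^n - 1)(2^{n+1} - 2^n - 1) = (2^n - 1)^2$, and the eigenvalue $|G| - |Z(G)| = 2^{2n} - 2^n$ with multiplicity $(2^n - 1) - 1 = 2^n - 2$. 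The check $1 + (2^n - 1)^2 + (2^n - 2) = 2^{2n} - 2^n = |G| - |Z(G)|$ shows these account for all the vertices, so the spectrum is exactly the one in the statement. The only genuinely delicate point is the reduction, in characteristic $2$, of the commuting relation to $a a'(a+a') = 0$; after that the argument is routine bookkeeping with Theorem \ref{AC-group}.
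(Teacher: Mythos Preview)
Your proof is correct and follows essentially the same approach as the paper: identify the centre, determine the centralizers of non-central elements (each of size $2^{n+1}$, one for each nonzero $a\in F$), observe that $A(n,\vartheta)$ is an AC-group, and then read off the spectrum. The only cosmetic difference is that you invoke Theorem~\ref{AC-group} directly, whereas the paper writes down the decomposition $\overline{\mathcal{A}_{A(n,\vartheta)}} = (2^n-1)K_{2^n}$ and applies Corollary~\ref{L-spec-3}; your explicit derivation of the commuting relation $aa'(a+a')=0$ is in fact more detailed than the paper's treatment.
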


\begin{proof}
Note that $Z(A(n, \vartheta)) = \{U(0, b) : b\in F\}$ and so $|Z(A(n, \vartheta))| = 2^n$. Let $U(a, b)$ be a non-central element of $A(n, \vartheta)$. It can be seen that the centralizer of $U(a, b)$ in $A(n, \vartheta)$ is $Z(A(n, \vartheta))\sqcup U(a, 0)Z(A(n, \vartheta))$. Clearly $A(n, \vartheta)$ is an AC-group and so, by \eqref{ducomgraph}, we have $\overline{{\mathcal{A}}_{A(n, \vartheta)}} = (2^n - 1)K_{2^n}$. Hence the result follows from     Corollary \ref{L-spec-3}.
\end{proof}

\begin{proposition}\label{Hanaki2}
Let $F = GF(p^n)$, $p$ be a prime. Then the Laplacian spectrum of the non-commuting graph of the group
\[
A(n, p) = \left\lbrace V(a, b, c) = \begin{bmatrix}
        1 & 0 & 0\\
        a & 1 & 0\\
        b & c & 1
       \end{bmatrix} : a, b, c \in F \right\rbrace
\]
under matrix multiplication $V(a, b, c)V(a', b', c') = V(a + a', b + b' + ca', c + c')$ is
\[
\L-spec({\mathcal{A}}_{A(n, p)}) = \{ 0, {(p^{3n}-p^{2n})}^{p^{3n}-2p^{n}-1}, {(p^{3n}-p^{n})}^{p^n} \}.
\]
\end{proposition}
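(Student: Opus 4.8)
The plan is to mirror exactly the strategy used in Propositions~\ref{Hanaki1} and the various AC-group corollaries: identify the center, compute the centralizers of non-central elements, verify the AC-property, and then invoke either Theorem~\ref{AC-group} directly or equation~\eqref{ducomgraph} together with Corollary~\ref{L-spec-3}. First I would record the group order: since $a, b, c$ range independently over $F = GF(p^n)$, we have $|A(n,p)| = p^{3n}$. Next I would determine $Z(A(n,p))$. From the multiplication rule $V(a,b,c)V(a',b',c') = V(a+a', b+b'+ca', c+c')$, computing the two products $V(a,b,c)V(a',b',c')$ and $V(a',b',c')V(a,b,c)$ shows they agree for all $a',b',c'$ precisely when $ca' = c'a$ for all $a', c'$, which forces $a = c = 0$. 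Hence $Z(A(n,p)) = \{V(0,b,0) : b \in F\}$, so $|Z(A(n,p))| = p^n$.

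The next step is to compute the centralizer of an arbitrary non-central element $V(a,b,c)$. Writing out the commuting condition $V(a,b,c)V(a',b',c') = V(a',b',c')V(a,b,c)$ gives the single equation $ca' = c'a$. If $a \neq 0$ this determines $c'$ as a scalar multiple of $a'$, namely $c' = (c/a)a'$; if $a = 0$ (so necessarily $c \neq 0$) it forces $a' = 0$. In either case the set of $(a',c')$ satisfying the relation is a one-dimensional $F$-subspace (of the two-dimensional $F$-space of $(a',c')$-pairs), and $b'$ is free, so $|C_{A(n,p)}(V(a,b,c))| = p^n \cdot p^n = p^{2n}$ for every non-central $V(a,b,c)$. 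It is then routine to check each such centralizer is abelian (the relation $ca' = c'a$ is symmetric in the two elements, and within a fixed centralizer all pairs $(a',c')$ are proportional to a common $(a,c)$, which makes the commuting condition automatic), so $A(n,p)$ is an AC-group. Counting the distinct centralizers: each is a subgroup of order $p^{2n}$ containing $Z := Z(A(n,p))$ of order $p^n$, so it corresponds to a one-dimensional subspace of the quotient; the non-central elements split into classes of size $p^{2n} - p^n$ each, giving $n' := (p^{3n} - p^n)/(p^{2n} - p^n) = p^n + 1$ distinct centralizers, all of the same order $|X_i| = p^{2n}$.

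Finally I would feed this into Theorem~\ref{AC-group} (or equivalently note $\overline{{\mathcal{A}}_{A(n,p)}} = (p^n+1)K_{p^{2n} - p^n}$ and apply Corollary~\ref{L-spec-3}). With all $|X_i| = p^{2n}$ equal, the ``middle'' eigenvalues collapse to a single value $|G| - |X_i| = p^{3n} - p^{2n}$ with total multiplicity $n'(p^{2n} - p^n - 1) = (p^n+1)(p^{2n} - p^n - 1) = p^{3n} - 2p^n - 1$, and the top eigenvalue is $|G| - |Z(G)| = p^{3n} - p^n$ with multiplicity $n' - 1 = p^n$, yielding exactly
\[
\L-spec({\mathcal{A}}_{A(n,p)}) = \{0, (p^{3n} - p^{2n})^{p^{3n} - 2p^n - 1}, (p^{3n} - p^n)^{p^n}\}.
\]
I do not anticipate a genuine obstacle here; the only place requiring a little care is the centralizer computation, specifically handling the $a = 0$ versus $a \neq 0$ cases uniformly to see that the order is always $p^{2n}$ and that abelianness holds in both cases. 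Everything else is bookkeeping already carried out in the analogous propositions above.
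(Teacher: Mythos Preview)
Your proposal is correct and follows essentially the same approach as the paper: identify $Z(A(n,p))=\{V(0,b,0)\}$, show every non-central centralizer has order $p^{2n}$ and is abelian, deduce $\overline{\mathcal{A}_{A(n,p)}}=(p^n+1)K_{p^{2n}-p^n}$, and apply Corollary~\ref{L-spec-3} (equivalently Theorem~\ref{AC-group}). The only cosmetic difference is that the paper splits the centralizer computation into three explicit cases rather than your two, and writes the complement as $K_{p^{2n}-p^n}\sqcup K_{p^{2n}-p^n}\sqcup (p^n-1)K_{p^{2n}-p^n}$ before collapsing it.
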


\begin{proof}
We have $Z(A(n, p)) = \{V(0, b, 0) : b \in F\}$ and so $|Z(A(n, p))| = p^n$. The centralizers of non-central elements of $A(n, p)$ are given by
\begin{enumerate}
\item If $b, c \in F$ and $c \ne 0$ then the centralizer of $V(0, b, c)$ in $A(n, p)$ is\\ $\{V(0, b', c') : b', c' \in F\}$ having order $p^{2n}$.
\item If $a, b \in F$ and $a \ne 0$ then the centralizer of $V(a, b, 0)$ in $A(n, p)$ is\\ $\{V(a', b', 0) : a', b' \in F\}$ having order $p^{2n}$.
\item If $a, b, c \in F$ and $a \ne 0, c \ne 0$ then the centralizer of $V(a, b, c)$ in $A(n, p)$ is $\{V(a', b', ca'a^{-1}) : a', b' \in F\}$ having order $p^{2n}$.
\end{enumerate}
It can be seen that all the centralizers of non-central elements of $A(n, p)$ are abelian. Hence $A(n, p)$ is an AC-group and so, by \eqref{ducomgraph}, we have
\[
\overline{{\mathcal{A}}_{A(n, p)}} = K_{p^{2n} - p^n}\sqcup K_{p^{2n} - p^n}\sqcup (p^n - 1)K_{p^{2n} - p^n} = (p^n + 1)K_{p^{2n} - p^n}.
\]
Hence the result follows  from   Corollary \ref{L-spec-3}.
\end{proof}

We would like to mention here that the groups considered in Proposition \ref{Hanaki1}-\ref{Hanaki2} are constructed by Hanaki (see \cite{Han96}). These groups are also considered in \cite{ali00}, in order to compute their numbers of distinct centralizers.

\section{Some consequences}

Note that the non-commuting graphs of  all the groups considered in Section 3 and 4 are L-integral. In this section, we determine some conditions on $G$ so that its non-commuting graph becomes L-integral.

A finite group is called an $n$-centralizer group if it has $n$ numbers of distinct element centralizers. It clear that $1$-centralizer groups are precisely the abelian groups. There are no $2$, $3$-centralizer finite groups. The study of these groups was initiated by  Belcastro and  Sherman   \cite{bG94} in the year 1994. We have the following results regarding $n$-centralizer groups.

\begin{proposition}\label{4-cent}
If $G$ is a finite $4$-centralizer group then ${\mathcal{A}}_G$ is L-integral.
\end{proposition}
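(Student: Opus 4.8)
The plan is to invoke the classification of finite $4$-centralizer groups due to Belcastro and Sherman. Recall from their 1994 paper that a finite group $G$ is a $4$-centralizer group if and only if $\frac{G}{Z(G)} \cong {\mathbb{Z}}_2 \times {\mathbb{Z}}_2$. Thus the entire statement reduces to an application of a theorem already available in this paper, namely Theorem \ref{main2} with $p = 2$.

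First I would state the classification: $G$ is a finite $4$-centralizer group precisely when $\frac{G}{Z(G)} \cong {\mathbb{Z}}_2 \times {\mathbb{Z}}_2$. Then I would apply Theorem \ref{main2} with $p = 2$, which gives
\[
\L-spec({\mathcal{A}}_G) = \{ 0,  {(2|Z(G)|)}^{3|Z(G)|-3},  {(3|Z(G)|)}^2 \},
\]
obtained by substituting $p^2 - p = 2$, $p^2 - 1 = 3$, and $p^2 - 1 - p - 1 = 3 - 2 - 1 = 0$ into the exponents (so the first exponent is $3|Z(G)| - 3$). Since $|Z(G)|$ is a positive integer, every eigenvalue appearing here is an integer, and hence ${\mathcal{A}}_G$ is L-integral.

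The only real content beyond bookkeeping is justifying the structural fact $\frac{G}{Z(G)} \cong {\mathbb{Z}}_2 \times {\mathbb{Z}}_2$ for $4$-centralizer groups; since this is precisely the Belcastro--Sherman theorem cited in the preceding paragraph, I would simply reference \cite{bG94} rather than reprove it. The potential subtlety — and the step I would be most careful about — is making sure the quoted classification is stated correctly (it is genuinely an iff for $4$-centralizer groups, unlike the $5$-centralizer case which admits two possibilities, ${\mathbb{Z}}_2 \times {\mathbb{Z}}_2 \times {\mathbb{Z}}_2$ via a different count or $S_3$); but for $n = 4$ there is no ambiguity, so the argument is genuinely a two-line deduction from Theorem \ref{main2}. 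No further obstacle is anticipated.
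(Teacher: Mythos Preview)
Your proof is correct and follows exactly the same route as the paper: invoke Belcastro--Sherman \cite{bG94} to obtain $G/Z(G)\cong\mathbb{Z}_2\times\mathbb{Z}_2$, then apply Theorem~\ref{main2} with $p=2$ to read off the integral Laplacian spectrum $\{0,(2|Z(G)|)^{3|Z(G)|-3},(3|Z(G)|)^{2}\}$. (Your parenthetical aside about the $5$-centralizer case is slightly off --- the two possibilities there are $\mathbb{Z}_3\times\mathbb{Z}_3$ and $D_6\cong S_3$, not $\mathbb{Z}_2^3$ --- but this has no bearing on the argument for $n=4$.)
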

\begin{proof}
Let $G$ be a finite $4$-centralizer group. Then, by  \cite[Theorem 2]{bG94}, we have  $\frac{G}{Z(G)} \cong {\mathbb{Z}}_2 \times {\mathbb{Z}}_2$. Therefore, by Theorem \ref{main2}, we have
\[\L-spec({\mathcal{A}}_G) = \{ 0,  {(2|Z(G)|)}^{3|Z(G)|-3},  {(3|Z(G)|)}^2 \}.
\]
 Hence, ${\mathcal{A}}_G$ is L-integral.
\end{proof}

\noindent Further, we have the following result.

\begin{proposition}
If $G$ is   a finite $(p+2)$-centralizer $p$-group for any prime $p$, then ${\mathcal{A}}_G$ is L-integral.
\end{proposition}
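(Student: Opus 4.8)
The plan is to reduce this to Theorem~\ref{main2} by identifying the structure of the central factor $\frac{G}{Z(G)}$. First I would recall the classification of $(p+2)$-centralizer $p$-groups. Since $1$-centralizer groups are abelian and there are no $2$- or $3$-centralizer groups, a $(p+2)$-centralizer group with $p=2$ is a $4$-centralizer group, which Proposition~\ref{4-cent} already handles via $\frac{G}{Z(G)} \cong \mathbb{Z}_2 \times \mathbb{Z}_2$. For general prime $p$, the key fact I would invoke (this is a known result on $n$-centralizer $p$-groups, e.g.\ from the work of Ashrafi or Belcastro--Sherman) is that a finite $p$-group $G$ is a $(p+2)$-centralizer group if and only if $\frac{G}{Z(G)} \cong \mathbb{Z}_p \times \mathbb{Z}_p$. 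The heuristic is that $\frac{G}{Z(G)}$ cannot be cyclic (else $G$ abelian), so $|\frac{G}{Z(G)}| \geq p^2$; and a $p$-group of order $p^2$ that is non-cyclic is $\mathbb{Z}_p \times \mathbb{Z}_p$, which contributes exactly the identity centralizer plus $p+1$ maximal-subgroup centralizers, giving $p+2$ in total. Larger central factors force strictly more distinct centralizers.

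Granting that $\frac{G}{Z(G)} \cong \mathbb{Z}_p \times \mathbb{Z}_p$, I would apply Theorem~\ref{main2} directly to obtain
\[
\L-spec({\mathcal{A}}_G) = \{ 0,  {((p^2-p)|Z(G)|)}^{(p^2-1)|Z(G)|-p-1},  {((p^2-1)|Z(G)|)}^p \},
\]
and since $|Z(G)|$, $p$, and all the exponents are integers, every Laplacian eigenvalue is an integer, so ${\mathcal{A}}_G$ is L-integral. This mirrors exactly the argument already used in Proposition~\ref{4-cent}, which is the $p=2$ instance.

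The main obstacle is pinning down and citing the correct statement that a $(p+2)$-centralizer $p$-group has central factor $\mathbb{Z}_p \times \mathbb{Z}_p$. If that classification is not available as a black box, I would argue it directly: writing $G$ for such a group, note $\frac{G}{Z(G)}$ is a non-trivial $p$-group (so has order at least $p^2$), and count element centralizers. Every centralizer $C_G(x)$ of a non-central $x$ contains $Z(G)$ properly, and distinct centralizers of non-central elements that are not equal intersect in a subgroup containing $Z(G)$; a careful count shows that if $|\frac{G}{Z(G)}| > p^2$ then there are more than $p+1$ such centralizers (because $\frac{G}{Z(G)}$ then has more than $p+1$ maximal subgroups, or has elements generating non-maximal proper subgroups, forcing additional distinct centralizers), contradicting the $(p+2)$-centralizer hypothesis. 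Hence $|\frac{G}{Z(G)}| = p^2$, and non-cyclicity gives $\frac{G}{Z(G)} \cong \mathbb{Z}_p \times \mathbb{Z}_p$. Once this structural fact is in hand, the rest is an immediate appeal to Theorem~\ref{main2} with no further computation needed.
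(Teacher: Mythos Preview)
Your approach is essentially identical to the paper's: cite the classification (the paper uses \cite[Lemma 2.7]{ali00}, i.e.\ Ashrafi) to get $\frac{G}{Z(G)} \cong \mathbb{Z}_p \times \mathbb{Z}_p$, then apply Theorem~\ref{main2} to read off the integral Laplacian spectrum. Your additional heuristic sketch of why the central factor must be $\mathbb{Z}_p \times \mathbb{Z}_p$ is extra commentary, but the actual proof line is the same as the paper's.
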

\begin{proof}
Let $G$ be a finite $(p + 2)$-centralizer $p$-group. Then, by   \cite[Lemma 2.7]{ali00}, we have  $\frac{G}{Z(G)} \cong {\mathbb{Z}}_p \times {\mathbb{Z}}_p$. Therefore, by Theorem \ref{main2}, we have
\[
\L-spec({\mathcal{A}}_G) = \{ 0,  {((p^2-p)|Z(G)|)}^{(p^2-1)|Z(G)|-p-1},  {((p^2-1)|Z(G)|)}^p \}.
\]
Hence, ${\mathcal{A}}_G$ is L-integral.
\end{proof}

\begin{proposition}\label{5-cent}
If $G$ is a  finite $5$-centralizer  group then ${\mathcal{A}}_G$ is L-integral.
\end{proposition}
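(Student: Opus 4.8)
The plan is to reduce the classification of finite $5$-centralizer groups to the handful of possible central factors, exactly as was done for $4$-centralizer groups via \cite[Theorem 2]{bG94}. First I would invoke the known structure theorem for $5$-centralizer groups (due to Belcastro and Sherman, and refined by Ashrafi): if $G$ is a finite $5$-centralizer group then $\frac{G}{Z(G)}$ is isomorphic to one of ${\mathbb{Z}}_3 \times {\mathbb{Z}}_3$, $D_6$, or $S_3$ (equivalently, $\frac{G}{Z(G)} \cong D_{2m}$ with $m = 3$, noting $D_6 \cong S_3$). The point is that each of these central factors has already been handled: the case $\frac{G}{Z(G)} \cong {\mathbb{Z}}_3 \times {\mathbb{Z}}_3$ is covered by Theorem \ref{main2} with $p = 3$, and the case $\frac{G}{Z(G)} \cong D_6$ is covered by Theorem \ref{main4} with $m = 3$.

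Next I would simply substitute these values into the cited theorems. From Theorem \ref{main2} with $p=3$ one gets
\[
\L-spec({\mathcal{A}}_G) = \{0, (6|Z(G)|)^{8|Z(G)|-4}, (8|Z(G)|)^3\},
\]
and from Theorem \ref{main4} with $m=3$ one gets
\[
\L-spec({\mathcal{A}_G}) = \{0, (3|Z(G)|)^{2|Z(G)|-1}, (4|Z(G)|)^{3|Z(G)|-3}, (5|Z(G)|)^3\}.
\]
In either case every eigenvalue is an integer multiple of $|Z(G)|$, hence an integer, so ${\mathcal{A}}_G$ is L-integral. The proof is therefore a short case analysis with no real computation beyond plugging into the earlier results.

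The only genuine obstacle is locating and correctly citing the structure theorem for $5$-centralizer groups, and making sure the list of possible central factors is complete and correct (in particular that no central factor outside the scope of Theorems \ref{main2} and \ref{main4} can occur). If a $5$-centralizer group could have a central factor not of the form ${\mathbb{Z}}_p \times {\mathbb{Z}}_p$ or $D_{2m}$, one would need a separate argument or an additional spectrum computation; but the literature on $n$-centralizer groups shows this does not happen for $n = 5$. Once that classification is in hand, the L-integrality is immediate, so I expect the write-up to be only a few lines invoking the relevant references and Theorems \ref{main2} and \ref{main4}.
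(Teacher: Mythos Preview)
Your proposal is correct and matches the paper's proof essentially line for line: the paper invokes \cite[Theorem~4]{bG94} to get $G/Z(G)\cong {\mathbb Z}_3\times{\mathbb Z}_3$ or $D_6$, then applies Theorem~\ref{main2} with $p=3$ and Theorem~\ref{main4} with $m=3$ to obtain exactly the two Laplacian spectra you wrote down. Your remark that $D_6\cong S_3$ correctly collapses the apparent third case, so nothing further is needed.
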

\begin{proof}
Let $G$ be a finite $5$-centralizer group. Then by  \cite[Theorem 4]{bG94} we have  $\frac{G}{Z(G)} \cong {\mathbb{Z}}_3 \times {\mathbb{Z}}_3$ or $D_6$. Now, if $\frac{G}{Z(G)} \cong {\mathbb{Z}}_3 \times {\mathbb{Z}}_3$ then  by Theorem \ref{main2} we have
$\L-spec({\mathcal{A}}_G) = \{ 0,  {(6|Z(G)|)}^{8|Z(G)|-4},  {(8|Z(G)|)}^3 \}$ and  hence ${\mathcal{A}}_G$ is L-integral. If $\frac{G}{Z(G)} \cong D_6$ then, by Theorem \ref{main4}, we have
\[
\L-spec({\mathcal{A}_G}) = \{0,  (3|Z(G)|)^{2|Z(G)|-1},  (4|Z(G)|)^{3|Z(G)| - 3}, (5|Z(G)|)^3 \}
\]
  and hence ${\mathcal{A}}_G$ is L-integral.  Therefore, the result follows.
\end{proof}

\noindent We also have the  following corollary.
\begin{corollary}
Let $G$ be a finite non-abelian group and $\{x_1, x_2, \dots, x_r\}$ be a set of pairwise non-commuting elements of $G$ having maximal size. Then ${\mathcal{A}}_G$ is L-integral if $r = 3, 4$.

\end{corollary}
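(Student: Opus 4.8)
The plan is to connect the combinatorial invariant $r$ — the maximal size of a set of pairwise non-commuting elements of $G$ — with the structure of the central quotient $\frac{G}{Z(G)}$, and then invoke the already-established L-integrality results. The key observation is that $r$ equals the clique number $\omega({\mathcal{A}}_G)$ of the non-commuting graph, equivalently the independence number of $\overline{{\mathcal{A}}_G}$, and this quantity is tied to the number of distinct centralizers. First I would recall the classical fact (due to Belcastro--Sherman, and used implicitly above) that for a non-abelian group $G$ one has $r \geq 3$ always, so the cases $r=3$ and $r=4$ are the two smallest possibilities.

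Next I would translate the hypothesis on $r$ into a hypothesis on the number of distinct element centralizers. The relevant input is the known result that a finite group with a pairwise non-commuting set of maximal size $r$ is an $n$-centralizer group with $n$ bounded in terms of $r$; more precisely, for $r = 3$ one gets (by the same analysis as in \cite{bG94}) that $\frac{G}{Z(G)} \cong {\mathbb{Z}}_2 \times {\mathbb{Z}}_2$, so $G$ is a $4$-centralizer group; and for $r = 4$ one gets that $\frac{G}{Z(G)}$ is isomorphic to ${\mathbb{Z}}_3 \times {\mathbb{Z}}_3$ or $D_6$ (equivalently, $S_3$), so $G$ is a $5$-centralizer group. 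These classifications are exactly the content underlying Propositions \ref{4-cent} and \ref{5-cent}.

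With that reduction in hand the proof closes immediately: if $r = 3$ then $G$ is a $4$-centralizer group, and Proposition \ref{4-cent} gives that ${\mathcal{A}}_G$ is L-integral; if $r = 4$ then $G$ is a $5$-centralizer group, and Proposition \ref{5-cent} gives that ${\mathcal{A}}_G$ is L-integral. One should also note the edge case: when $G$ is abelian the statement is vacuous since there is no non-commuting graph (or $r \leq 1$), so we may assume $G$ is non-abelian, which is consistent with $r \in \{3,4\}$.

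The main obstacle I anticipate is justifying the equivalence between the bound on $r$ and the bound on the number of distinct centralizers — that is, showing $r = 3 \Rightarrow \frac{G}{Z(G)} \cong {\mathbb{Z}}_2 \times {\mathbb{Z}}_2$ and $r = 4 \Rightarrow \frac{G}{Z(G)} \in \{{\mathbb{Z}}_3 \times {\mathbb{Z}}_3, D_6\}$. One direction (centralizer count controls $r$) is routine since distinct maximal centralizers of non-central elements contribute pairwise non-commuting representatives; the subtle direction needs the observation that a pairwise non-commuting set of size $r$ forces at least $r$ distinct centralizers together with a covering argument (a group is not the union of the proper centralizers unless there are enough of them), which is precisely where one would cite the relevant lemmas from \cite{bG94} and \cite{ali00}. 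Once this dictionary is set up, everything else is a direct appeal to the propositions already proved.
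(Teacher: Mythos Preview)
Your proposal is correct and follows essentially the same route as the paper: reduce $r=3$ (resp.\ $r=4$) to $G$ being a $4$-centralizer (resp.\ $5$-centralizer) group, then invoke Propositions~\ref{4-cent} and~\ref{5-cent}. The only difference is bibliographic: the paper cites \cite[Lemma~2.4]{ajH07} directly for the implication $r\in\{3,4\}\Rightarrow G$ is $4$- or $5$-centralizer, whereas you sketch how one would recover this from \cite{bG94} and \cite{ali00}.
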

\begin{proof}
By Lemma 2.4 in \cite{ajH07}, we have that $G$ is a $4$-centralizer or a $5$-centralizer group according as  $r = 3$ or $4$. Hence the result follows from Proposition \ref{4-cent} and Proposition \ref{5-cent}.
\end{proof}

The commuting probability of a finite group $G$ denoted by $\Pr(G)$ is  the probability that any two randomly chosen elements of $G$ commute. Clearly, $\Pr(G) = 1$ if and only if $G$ is abelian. The study of $\Pr(G)$ is originated from a paper of Erd$\ddot{\rm o}$s and Tur$\acute{\rm a}$n \cite{Et68}. Various results on $\Pr(G)$ can be found in \cite{Caste10,Dnp13,Nath08}. The following results show that ${\mathcal{A}}_G$ is L-integral if  $\Pr(G)$ has some particular values.

\begin{proposition}
If $\Pr(G) \in \{\frac{5}{14}, \frac{2}{5}, \frac{11}{27}, \frac{1}{2}, \frac{5}{8}\}$ then ${\mathcal{A}}_G$ is L-integral.
\end{proposition}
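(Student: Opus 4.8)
The plan is to reduce the statement to the classification results on $\Pr(G)$ that pin down the central factor $G/Z(G)$, and then invoke the Laplacian spectrum formulas already established in Sections 3 and 4. The starting observation is that $\Pr(G)$ depends only on $G/Z(G)$ in the relevant range: in fact each of the five listed values corresponds (via the known classification of groups with small commuting probability, e.g. the work of Rusin and its successors) to a specific isoclinism class, hence to a specific group $G/Z(G)$. Concretely, I expect the correspondence to be $\Pr(G) = \frac{1}{2} \Leftrightarrow G/Z(G)\cong \mathbb{Z}_2\times\mathbb{Z}_2$; $\Pr(G) = \frac{5}{8}$ is not of this form but rather forces $G/Z(G) \cong \mathbb{Z}_2 \times \mathbb{Z}_2$ is wrong — instead $\frac58$ is the value attached to groups with $|G/Z(G)| = 4$ only if... so the first real task is to match each fraction to its central factor. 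The expected matches are $\frac{5}{14}\leftrightarrow D_{14}$ (dihedral of order $14$, i.e. $D_{2m}$ with $m=7$), $\frac{2}{5}\leftrightarrow D_{10}$ ($m=5$), $\frac{11}{27}\leftrightarrow \mathbb{Z}_3\times\mathbb{Z}_3$ or a nonabelian group of order $27$, $\frac{1}{2}\leftrightarrow \mathbb{Z}_2\times\mathbb{Z}_2$, and $\frac{5}{8}\leftrightarrow$ the dihedral case $D_6\cong S_3$ (for which $\Pr = \frac12$) — so I will need to check these carefully against the literature rather than guess.

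Once the central factors are identified, the proof is a finite case check. For each value of $\Pr(G)$ I would: first, cite the relevant classification theorem to conclude that $G/Z(G)$ is isomorphic to one of $\mathbb{Z}_p\times\mathbb{Z}_p$, $D_{2m}$, or $Sz(2)$, or that $G$ itself is one of the explicit groups $QD_{2^n}$, $PSL(2,2^k)$, $GL(2,q)$, $A(n,\vartheta)$, $A(n,p)$ treated in Section 4; second, apply the corresponding theorem — Theorem \ref{main2} when $G/Z(G)\cong\mathbb{Z}_p\times\mathbb{Z}_p$, Theorem \ref{main4} when $G/Z(G)\cong D_{2m}$, Theorem \ref{order-20} when $G/Z(G)\cong Sz(2)$, or the appropriate proposition from Section 4 — to write down $\L-spec({\mathcal{A}}_G)$ explicitly; third, observe that every eigenvalue appearing in that formula is an integer multiple of $|Z(G)|$ (or an explicit integer), hence $\L-spec({\mathcal{A}}_G)\subseteq\mathbb{Z}$, which is exactly the definition of L-integral. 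The multiplicities are automatically nonnegative integers since they are the multiplicities produced by the earlier theorems. This mirrors exactly the pattern of Propositions \ref{4-cent}, \ref{5-cent} above.

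The main obstacle is the bookkeeping in the first step: correctly matching each of the five fractions $\frac{5}{14},\frac{2}{5},\frac{11}{27},\frac{1}{2},\frac{5}{8}$ to the isomorphism type of $G/Z(G)$, because this requires invoking the classification of finite groups by commuting probability (the list of Rusin, refined by Das–Nath and others cited in \cite{Dnp13,Nath08}) and verifying that in each case the central factor falls into one of the families covered by Theorems \ref{order-20}, \ref{main2}, \ref{main4}. In particular one must be careful that $\Pr(G)=\frac{11}{27}$ genuinely forces $G/Z(G)\cong\mathbb{Z}_3\times\mathbb{Z}_3$ (as opposed to some other group of order $27$ with abelian centralizers), and that $\Pr(G)=\frac58$ corresponds to a group whose non-commuting graph is covered — I suspect $\frac58$ should instead be $\Pr(G)$ for a group with $G/Z(G)\cong\mathbb{Z}_2\times\mathbb{Z}_2$ in a stacked quotient, or else the value is realized by $D_{2m}$ with $m$ even and small; resolving this point is where the argument needs care. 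Once the dictionary between commuting probabilities and central factors is fixed, the remainder is a routine substitution into the formulas of Theorem \ref{main2}, Theorem \ref{main4} and Theorem \ref{order-20} followed by the trivial remark that all the resulting eigenvalues are integers.
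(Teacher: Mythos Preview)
Your overall strategy is exactly the one the paper uses: cite the classification of groups by commuting probability (Rusin \cite{Rusin79} and Nath \cite{Nath13}) to pin down $G/Z(G)$, then apply Theorem~\ref{main2} or Theorem~\ref{main4} and observe that the resulting eigenvalues are integers. There is no need to invoke Theorem~\ref{order-20} or any of the Section~4 propositions; only the $\mathbb{Z}_p\times\mathbb{Z}_p$ and $D_{2m}$ cases arise.

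Where your proposal falls short is precisely the bookkeeping you flag as uncertain, and some of your tentative matches are wrong. The correct dictionary (as the paper extracts from \cite[p.~246]{Rusin79} and \cite[p.~451]{Nath13}) is
\[
\tfrac{5}{8}\leftrightarrow \mathbb{Z}_2\times\mathbb{Z}_2,\quad
\tfrac{1}{2}\leftrightarrow D_6,\quad
\tfrac{11}{27}\leftrightarrow D_8,\quad
\tfrac{2}{5}\leftrightarrow D_{10},\quad
\tfrac{5}{14}\leftrightarrow D_{14}.
\]
In particular you have the $\tfrac{5}{8}$ and $\tfrac{1}{2}$ cases swapped: recall that $\Pr(G)=\tfrac{5}{8}$ is the classical maximum for non-abelian $G$ and forces $G/Z(G)\cong\mathbb{Z}_2\times\mathbb{Z}_2$ (this is the case $p=2$ of the formula $(p^2+p-1)/p^3$), while $\Pr(S_3)=\tfrac{1}{2}$ with $S_3/Z(S_3)\cong D_6$. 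Your guess that $\tfrac{11}{27}$ might correspond to $\mathbb{Z}_3\times\mathbb{Z}_3$ is consistent with the formula $(p^2+p-1)/p^3$ at $p=3$, and indeed Rusin's list allows this possibility as well; either way Theorem~\ref{main2} or Theorem~\ref{main4} applies, so the conclusion is unaffected. Once you fix the dictionary, the proof is the two-line case split the paper gives: four dihedral quotients handled by Theorem~\ref{main4}, one $\mathbb{Z}_p\times\mathbb{Z}_p$ quotient handled by Theorem~\ref{main2}.
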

\begin{proof}
If $\Pr(G) \in \{\frac{5}{14}, \frac{2}{5}, \frac{11}{27}, \frac{1}{2}, \frac{5}{8}\}$ then as shown in \cite[pp. 246]{Rusin79} and \cite[pp. 451]{Nath13}, we have $\frac{G}{Z(G)}$ is isomorphic to one of the groups in $\{D_{14}, D_{10}, D_8, D_6, {\mathbb{Z}}_2\times {\mathbb{Z}}_2\}$. If $\frac{G}{Z(G)}$ is isomorphic to $D_{14}, D_{10}, D_8$ or $D_6$ then,  by Theorem \ref{main4},  it follows that ${\mathcal{A}}_G$ is  L-integral. If $\frac{G}{Z(G)}$ is isomorphic to ${\mathbb{Z}}_2\times {\mathbb{Z}}_2$ then,  by Theorem \ref{main2}, it follows that ${\mathcal{A}}_G$ is   L-integral. Hence,  the result follows.
\end{proof}
\begin{proposition}
Let $G$ be a finite group and $p$ the smallest prime divisor of $|G|$. If $\Pr(G) = \frac{p^2 + p - 1}{p^3}$ then ${\mathcal{A}}_G$ is   L-integral.
\end{proposition}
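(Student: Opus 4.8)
The plan is to reduce this statement to Theorem~\ref{main2} by showing that the prescribed value of $\Pr(G)$ can be attained only when $\frac{G}{Z(G)}\cong{\mathbb{Z}}_p\times{\mathbb{Z}}_p$. First I would note that, since $\frac{p^2+p-1}{p^3}<1$, the group $G$ must be non-abelian, so $\frac{G}{Z(G)}$ is a non-trivial group; moreover it is not cyclic, because a group with cyclic central quotient is abelian. Hence $[G:Z(G)]$ is composite, and as every prime divisor of $[G:Z(G)]$ divides $|G|$ while $p$ is the smallest such prime, we obtain $[G:Z(G)]\ge p^2$, that is $\frac{|Z(G)|}{|G|}\le\frac{1}{p^2}$.

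Next I would invoke the standard identity $\Pr(G)=\frac{k(G)}{|G|}$, where $k(G)$ is the number of conjugacy classes of $G$. The $|Z(G)|$ central elements form singleton classes, while the remaining $|G|-|Z(G)|$ elements split into non-central classes, each of size $[G:C_G(x)]\ge p$ since this index divides $|G|$ and exceeds $1$. Consequently
\[
k(G)\le |Z(G)|+\frac{|G|-|Z(G)|}{p},
\]
so dividing by $|G|$ gives $\Pr(G)\le\frac{1}{p}+\left(1-\frac{1}{p}\right)\frac{|Z(G)|}{|G|}$. Because $1-\frac1p>0$ and $\frac{|Z(G)|}{|G|}\le\frac{1}{p^2}$, this yields $\Pr(G)\le\frac{1}{p}+\left(1-\frac{1}{p}\right)\frac{1}{p^2}=\frac{p^2+p-1}{p^3}$.

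The hypothesis $\Pr(G)=\frac{p^2+p-1}{p^3}$ therefore forces equality in this chain, and in particular $\frac{|Z(G)|}{|G|}=\frac{1}{p^2}$, i.e. $[G:Z(G)]=p^2$. A group of order $p^2$ that is not cyclic must be isomorphic to ${\mathbb{Z}}_p\times{\mathbb{Z}}_p$, so $\frac{G}{Z(G)}\cong{\mathbb{Z}}_p\times{\mathbb{Z}}_p$. Theorem~\ref{main2} then gives
\[
\L-spec({\mathcal{A}}_G)=\{0,\,{((p^2-p)|Z(G)|)}^{(p^2-1)|Z(G)|-p-1},\,{((p^2-1)|Z(G)|)}^p\},
\]
whose entries are all integers, and hence ${\mathcal{A}}_G$ is L-integral.

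The genuinely substantive part is the inequality $\Pr(G)\le\frac{p^2+p-1}{p^3}$ together with its equality case; everything else is either routine bookkeeping or an appeal to Theorem~\ref{main2}. The step needing a little care is the equality analysis: one must verify that attaining the bound pins down $[G:Z(G)]=p^2$ exactly, rather than permitting a larger index compensated by larger conjugacy classes, and this is precisely where the monotonicity of $t\mapsto\frac1p+\left(1-\frac1p\right)t$ combined with $1-\frac1p>0$ is used. Alternatively, the characterisation of the finite groups with $\Pr(G)=\frac{p^2+p-1}{p^3}$ is available in the literature on commuting probability and could simply be cited, after which one would pass directly to Theorem~\ref{main2}.
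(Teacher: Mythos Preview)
Your proof is correct and follows the same route as the paper: deduce that $\frac{G}{Z(G)}\cong{\mathbb{Z}}_p\times{\mathbb{Z}}_p$ and then invoke Theorem~\ref{main2}. The only difference is that the paper obtains this isomorphism by citing MacHale~\cite[Theorem~3]{dM74}, whereas you supply a self-contained proof of that result---which you yourself note at the end as an alternative.
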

\begin{proof}
If $\Pr(G) = \frac{p^2 + p - 1}{p^3}$ then by \cite[Theorem 3]{dM74} we have $\frac{G}{Z(G)}$ is isomorphic to ${\mathbb{Z}}_p\times {\mathbb{Z}}_p$. Now, by Theorem \ref{main2}, it follows that ${\mathcal{A}}_G$ is   L-integral.
\end{proof}
\begin{proposition}
If $G$ is a non-solvable group with $\Pr(G) = \frac{1}{12}$ then ${\mathcal{A}}_G$ is L-integral.
\end{proposition}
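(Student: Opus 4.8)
The plan is to reduce the statement to the computation for AC-groups via a structural description of $G$. First I would invoke the classification of non-solvable groups attaining commuting probability $\frac{1}{12}$: a non-solvable finite group $G$ with $\Pr(G) = \frac{1}{12}$ satisfies $\frac{G}{Z(G)} \cong A_5$. From this I would deduce the sharper fact that $G \cong A_5 \times A$ for some finite abelian group $A$. Indeed, since $A_5$ is perfect we have $G = G'Z(G)$ and $G' = [G',G']$, so $G'$ is a perfect central extension of $\frac{G'}{G' \cap Z(G)} \cong A_5$; hence $G'$ is isomorphic to $A_5$ or to its Schur cover $SL(2,5)$. The second possibility would make $G$ isoclinic to $SL(2,5)$, forcing $\Pr(G) = \Pr(SL(2,5)) = \frac{9}{120} = \frac{3}{40} \ne \frac{1}{12}$, a contradiction. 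Therefore $G' \cong A_5$, and since $G' \cap Z(G) \le Z(A_5) = \{1\}$ while $G = G'Z(G)$, we conclude $G \cong A_5 \times A$ with $A = Z(G)$.

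Next I would record that $A_5$ is a non-abelian AC-group: the centralizer in $A_5$ of an element of order $5$ (respectively $3$) is the cyclic subgroup of order $5$ (respectively $3$) it generates, and the centralizer of an element of order $2$ is the unique Klein four-group containing it; all of these are abelian. I would then list the distinct centralizers of non-central elements of $A_5$ together with their sizes: the $6$ Sylow $5$-subgroups (size $5$), the $10$ Sylow $3$-subgroups (size $3$), and the $5$ Klein four-subgroups (size $4$), so that there are $n = 21$ of them, $|Z(A_5)| = 1$ and $|A_5| = 60$.

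Finally I would apply Corollary \ref{AC-cor} with $G = A_5$ and the abelian group $A$ above. Every value occurring there is of the form $|A|(60 - |X_i|)$ with $|X_i| \in \{3,4,5\}$, or $|A|\cdot 59$, hence an integer, and every multiplicity $|A|(|X_i| - 1) - 1$ or $20$ is a non-negative integer; concretely $\L-spec({\mathcal{A}}_G) = \{0, (55|A|)^{24|A| - 6}, (56|A|)^{15|A| - 5}, (57|A|)^{20|A| - 10}, (59|A|)^{20}\}$. Since all entries are integers, ${\mathcal{A}}_G$ is L-integral.

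The step I expect to be the real obstacle is the group-theoretic reduction: passing from $\Pr(G) = \frac{1}{12}$ to the direct product description $G \cong A_5 \times A$, in particular ruling out the $SL(2,5)$-type central product (which relies on invariance of $\Pr$ under isoclinism and the identification of the perfect central extensions of $A_5$). Once this structure is available, L-integrality follows immediately from Corollary \ref{AC-cor}, exactly as for the other families treated in this paper.
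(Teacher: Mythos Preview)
Your proposal is correct and follows essentially the same route as the paper: reduce to $G \cong A_5 \times A$ with $A$ abelian, observe that $A_5$ is an AC-group, and then invoke Corollary~\ref{AC-cor}. The only difference is that the paper quotes the direct product decomposition $G \cong A_5 \times B$ straight from \cite[Proposition~3.3.7]{Caste10}, whereas you start from the weaker input $G/Z(G) \cong A_5$ and supply the Schur-cover argument (ruling out $SL(2,5)$ via $\Pr(SL(2,5)) = 3/40$) to recover it; your explicit spectrum $\{0,(55|A|)^{24|A|-6},(56|A|)^{15|A|-5},(57|A|)^{20|A|-10},(59|A|)^{20}\}$ is a welcome bonus that the paper omits.
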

\begin{proof}
By \cite[Proposition 3.3.7]{Caste10}, we have that $G$ is isomorphic to $A_5 \times B$ for some abelian group $B$. Since $A_5$ is an AC-group, by Corollary \ref{AC-cor}, it follows that ${\mathcal{A}}_G$ is  L-integral.
\end{proof}

A graph is called planar if it can be embedded in the plane so that no two edges intersect geometrically except at a vertex to which both are adjacent. We conclude this paper with the following result.
\begin{proposition}
Let $G$ be a finite group then ${\mathcal{A}}_G$ is   L-integral if ${\mathcal{A}}_G$ is planar.
\end{proposition}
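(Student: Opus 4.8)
The plan is to show that the planarity hypothesis is extremely restrictive: it forces $|G|\leq 10$, after which the conclusion is immediate from the results of Section~3. Recall that every non-empty simple planar graph has a vertex of degree at most $5$, since a simple planar graph on $n\geq 3$ vertices has at most $3n-6$ edges and hence average degree less than $6$. As $G$ is non-abelian, ${\mathcal{A}}_G$ is non-empty, so if ${\mathcal{A}}_G$ is planar then $\delta({\mathcal{A}}_G)\leq 5$, where $\delta$ denotes the minimum degree. On the other hand, a vertex $x\in G\setminus Z(G)$ of ${\mathcal{A}}_G$ is adjacent exactly to those non-central elements that do not commute with $x$, so its degree equals $|G\setminus Z(G)|-|C_G(x)\setminus Z(G)|=|G|-|C_G(x)|$; therefore $\delta({\mathcal{A}}_G)=|G|-\max_{x\in G\setminus Z(G)}|C_G(x)|$. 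Since $C_G(x)$ is a proper subgroup of $G$ for each non-central $x$, we have $|C_G(x)|\leq |G|/2$, and hence $\delta({\mathcal{A}}_G)\geq |G|/2$. Combining the two inequalities, $|G|/2\leq 5$, that is, $|G|\leq 10$.

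It then remains to handle finitely many groups. The non-abelian groups of order at most $10$ are $D_6\,(\cong S_3)$, $D_8$, $D_{10}$ and $Q_8$. By Corollary~\ref{main005}, ${\mathcal{A}}_{D_{2m}}$ is L-integral for every $m>2$, which takes care of $D_6$, $D_8$ and $D_{10}$; and by Corollary~\ref{q4m} applied with $n=2$, ${\mathcal{A}}_{Q_8}$ is L-integral. Thus ${\mathcal{A}}_G$ is L-integral in every case, and the proposition follows.

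I do not foresee any genuine difficulty in filling this in. The only points needing a little care are the standard fact that a planar graph has minimum degree at most $5$ and the elementary index bound $|C_G(x)|\leq |G|/2$ for non-central $x$; once these yield $|G|\leq 10$, the statement reduces to the classification of small non-abelian groups together with Corollaries~\ref{main005} and~\ref{q4m}. One could instead quote the known classification of finite groups whose non-commuting graph is planar (namely $S_3$, $D_8$ and $Q_8$; see \cite{Ab06}) and apply the same two corollaries, but the minimum-degree argument above keeps the proof self-contained.
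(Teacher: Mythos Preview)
Your argument is correct. The paper proceeds by quoting Proposition~2.3 of \cite{Ab06}, which classifies the finite groups with planar non-commuting graph as exactly $D_6$, $D_8$ and $Q_8$, and then invokes Corollaries~\ref{main005} and~\ref{q4m}. You instead derive the weaker but sufficient bound $|G|\le 10$ directly from the minimum-degree inequality $\delta({\mathcal{A}}_G)=|G|-\max_{x\notin Z(G)}|C_G(x)|\ge |G|/2$ together with $\delta\le 5$ for planar graphs, and then run through the four non-abelian groups of order at most $10$. This is a genuinely different route: it trades the external classification for a short self-contained estimate, at the price of an extra case ($D_{10}$, whose non-commuting graph contains a $K_5$ on the reflections and is in fact not planar, but this is harmless since you only need L-integrality). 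The paper's approach is shorter and yields the exact list of planar examples; yours avoids dependence on \cite{Ab06} and makes the finiteness transparent. Either way the conclusion reduces to Corollaries~\ref{main005} and~\ref{q4m}.
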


\begin{proof}
It was shown in Proposition 2.3 of \cite{Ab06} that  ${\mathcal{A}}_G$ is planar if and only if $G$ is isomorphic to $D_6, D_8$ or $Q_8$. Therefore, by Corollary \ref{main005} and Corollary \ref{q4m}, the result follows.
\end{proof}


\end{document}